\newcommand{\R}{\mathbb{R}}
\newcommand{\N}{\mathbb{N}}
\newcommand{\Z}{\mathbb{Z}}
\newcommand{\pp}{\mathbb{P}}
\newcommand{\kC}{\mathcal{C}}
\newcommand{\kO}{\mathcal{O}}
\newcommand{\kF}{\mathcal{F}}
\newcommand{\lin}{\left[\kern-0.15em\left[}
\newcommand{\rin} {\right]\kern-0.15em\right]}
\newcommand{\ilin}{\left]\kern-0.15em\left]}
\newcommand{\irin} {\right[\kern-0.15em\right[}
\newtheorem {lem} {Lemma} [section]
\newtheorem {theo} {Theorem} [section]
\newtheorem {conj} {Conjecture} [section]
\newtheorem {rem} {Remark} [section]
\def\epsilon{\varepsilon}
\def\tilde{\widetilde}
\newcommand{\1}{{\text{\Large $\mathfrak 1$}}}
\newcommand{\bs}{\boldsymbol}
\title[Localization on $5$ sites for VRRW]
      {Localization on $5$ sites for Vertex reinforced random walks: towards a characterization}
\author{Bruno Schapira}
\thanks{Aix-Marseille Universit\'e, CNRS, Centrale Marseille, I2M, UMR 7373, 13453 Marseille, France;  bruno.schapira@univ-amu.fr}
\begin{document}
\maketitle

\begin{abstract} We continue the investigation of the localization phenomenon for 
a Vertex Reinforced Random Walk on the integer lattice. 
We provide some partial results towards a full characterization of the weights for which localization on $5$ sites occurs with positive probability, and make some conjecture concerning the almost sure behavior. 
\newline
\newline
\emph{Keywords and phrases.} Self-interacting random walks; Vertex Reinforced Random Walk.\\
MSC 2010 \emph{subject classifications.} 60K35.
\end{abstract}

\section{Introduction} 
Given a sequence $w = (w(n))_{n\ge 0}$ of positive real numbers, called the weight, one can define a process $(X_n)_{n\ge 0}$ on $\Z$, called Vertex Reinforced Random Walk (VRRW) as follows: first $X_0=0$, and then for any $n\ge 0$ and $x\in \Z$, 
\begin{equation}\label{transiVRRW}
\pp(X_{n+1}= x\pm 1 \mid \kF_n)\, = \, \frac{w(Z_n(x\pm
1))}{w(Z_n(x+1)) + w(Z_n(x-1))},
\end{equation}
where $\kF_n:=\sigma(X_0,\ldots,X_n)$ and
$Z_n(y)$ is the number of visits to site $y$ by the process before time $n$ (see below). 
This process was introduced by Pemantle \cite{P} on the complete graph and for a linear weight, and then by 
Pemantle and Volkov on $\Z$, still for the linear weight, who showed that the process localizes on five 
sites with positive probability, that is with positive probability exactly five sites are visited infinitely often.  
This result was later improved by Tarr\`es who showed \cite{T1,T2} that this behavior occurs in fact almost surely.

A few years later, Volkov \cite{V} introduced the model with a general weight sequence, in the same fashion as Davis \cite{Dav} did for Edge Reinforced Random Walks. He proved in particular that for weights of the form $w(n)=n^\alpha$, with $\alpha<1$, 
localization on a finite subgraph is not possible. This was later improved in \cite{CK,Sch,S} in the case $\alpha<1/2$, where it was proved that 
the process visits almost surely all sites infinitely often.

In a previous work in collaboration with Basdevant and Singh \cite{BSS}, we managed to completely characterize the nondecreasing weights for which localization on $4$ sites occurs with positive probability, or almost surely, in terms of some parameter $\alpha_c(w)$ (see below). 
Our aim here is to analyze the analogous question for the  
localization on $5$ sites. For this we introduce 
some new parameter $\beta_c(w)$, which should play a similar role as $\alpha_c(w)$. To define it, we first extend $w$ as a function  
on the positive reals by $w(t): = w(\lfloor t\rfloor)$, and then set
\begin{equation*}
W(t) := \int_0^t \frac 1{w(u)}\, du.
\end{equation*}
We will assume throughout the paper that 
\begin{equation}\label{twosite}
\sum_{n=0}^{\infty}\frac{1}{w(n)} = \infty,
\end{equation}
which is equivalent to saying that 
$W$ is a bijection from $\R_+$ to itself. Note however, that this is not a restrictive hypothesis, since when $w$ is reciprocally summable, it is known \cite{BSS,V} that the process localizes almost surely on two sites. Then we denote by
$W^{-1}$ its inverse, and define for $\alpha>0$, 
\begin{equation*}
I_\alpha(w):= \int_0^\infty
\frac{dx}{w(W^{-1}(W(x)+\alpha))}.
\end{equation*}
When $w$ is nondecreasing, the map $\alpha
\mapsto I_\alpha(w)$ is nonincreasing and one defines
\begin{equation}\label{defAlphac}
\alpha_c(w):=\inf \{\alpha\ge 0 \ :\ I_\alpha(w)<\infty\}\in
[0,\infty], 
\end{equation}
with the convention that $\inf \emptyset = \infty$. In \cite{BSS} it was proved in particular that localization on $4$ sites holds with nonzero probability if, and only if, $\alpha_c(w)$ is finite. 
We now define for $\beta \in \R$, 
$$J_\beta(w):= \int_0^\infty \frac{dx} {w(W^{-1}(2W(x) + \beta))},$$
with the convention that $W^{-1}(u)=0$, for $u<0$, and set
$$\beta_c(w):= \inf\{ \beta\in \R\ :\ J_\beta(w)<\infty\}\, \in \, [-\infty,+\infty].$$
We make the following conjecture (with $R'$ standing for the set of sites which are visited infinitely often): 
\begin{conj} 
\label{conj}
Assume that $w$ is nondecreasing and satisfies \eqref{twosite}. Assume further that $\alpha_c(w)=\infty$. Then  
\begin{eqnarray*}
\pp(|R'|=5)> 0 \ \Longleftrightarrow \ \pp(|R'|=5)=1 \ \Longleftrightarrow \ \beta_c(w)<\infty.  
\end{eqnarray*} 
\end{conj}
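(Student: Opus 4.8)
The plan is to follow the strategy developed in \cite{BSS} for localization on $4$ sites, adapting it to one more site. The natural picture for localization on $5$ consecutive sites $\{x_0-2,\ldots,x_0+2\}$ is that the two extreme sites $x_0\pm 2$ get visited only very rarely, while the ``bulk'' $\{x_0-1,x_0,x_0+1\}$ absorbs almost all the mass; the walk on the bulk behaves, after renormalization, like a VRRW localized on $3$ sites, whose local times at the two endpoints grow like $W^{-1}$ of (roughly) twice the local time at the center --- this is the origin of the factor $2$ in the definition of $J_\beta(w)$ and $\beta_c(w)$. The equivalence $\pp(|R'|=5)>0\Leftrightarrow \pp(|R'|=5)=1$ should be handled first, and independently of $\beta_c(w)$, by the general $0$--$1$ type arguments for VRRW: conditionally on localizing on some set of $5$ sites, a.s.\ the configuration of local times converges to a deterministic profile, and a.s.\ either localization on $5$ sites occurs or it does not, since the event is measurable with respect to the tail $\sigma$-field and the process is (by results quoted above, using $\alpha_c(w)=\infty$ to rule out localization on $3$ or $4$ sites) forced into localization-on-$5$ or full recurrence. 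This reduces everything to proving $\pp(|R'|=5)>0\Leftrightarrow \beta_c(w)<\infty$.

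For the implication $\beta_c(w)<\infty\Rightarrow \pp(|R'|=5)>0$, I would construct an explicit favorable event and show it has positive probability by a Borel--Cantelli / martingale argument. Fix a finite time $T$ at which the walk has already built a large local-time profile on $5$ sites, and then control, for all later times, the probability that the walk ever leaves this window. The key quantity is the number of ``attempts'' the walk makes to escape from an endpoint, say $x_0+2$, to $x_0+3$; each such attempt succeeds with probability $w(0)/(w(0)+w(Z(x_0+2)))$, and one needs the expected number of attempts, summed over all time, to be finite. Translating the dynamics of the bulk into a time change, the local time at $x_0+2$ at the moment the center has local time $\sim n$ is $\sim W^{-1}(2W(c) + \beta)$ for the appropriate constant $\beta$ depending on the initial profile at time $T$; hence the expected total number of escape attempts is comparable to $\sum_n 1/w(W^{-1}(2W(n)+\beta)) \asymp J_\beta(w)$, which is finite precisely because $\beta$ can be taken $>\beta_c(w)$. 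Choosing the profile at time $T$ generous enough (this happens with positive probability) makes this sum as small as we like, so with positive probability no escape ever occurs.

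The converse $\pp(|R'|=5)>0\Rightarrow \beta_c(w)<\infty$ is where the main obstacle lies. Here one assumes localization on $5$ sites with positive probability and must extract finiteness of $J_\beta$ for some $\beta$. On the event of localization, the profile of local times on the $5$ visited sites converges a.s.; in particular the local time at an endpoint $x_0+2$ is, up to bounded multiplicative and additive errors, $W^{-1}(2W(Z(x_0))+\beta)$ for some (random, but a.s.\ finite) $\beta$ --- this asymptotic relation between endpoint and center local times in a $3$-site-like regime is the analytic heart of the argument and must be proven, presumably via a stochastic-approximation / ODE analysis of the vector of normalized local times as in Tarr\`es's and Volkov's work. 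Once this relation is established, the fact that the walk visits $x_0+3$ only finitely often forces $\sum_n w(0)/(w(0)+w(\text{endpoint local time at center-time }n)) <\infty$ on a positive-probability event, and since the summand is deterministic given $\beta$ this yields $J_\beta(w)<\infty$, hence $\beta_c(w)<\infty$. I expect the delicate points to be: (i) showing the limiting profile is non-degenerate and genuinely ``$3$-site-like'' in the bulk --- ruling out e.g.\ the center being starved --- which uses monotonicity of $w$ crucially; (ii) making rigorous the comparison between the discrete escape-attempt sum and the integral $J_\beta(w)$, including handling the floor function in the definition of $w(t)$; and (iii) the a.s.\ convergence of the normalized local-time vector, for which the general VRRW stochastic approximation machinery applies but requires care because the relevant ODE has a line (not a point) of equilibria.
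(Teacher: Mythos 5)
The statement you are proving is Conjecture 1.1, which the paper does \emph{not} prove; it only establishes two partial results (Theorems \ref{positif5} and \ref{loc}), and your proposal does not close the gaps that keep the full statement conjectural. The most serious problem is your treatment of the equivalence $\pp(|R'|=5)>0\Leftrightarrow\pp(|R'|=5)=1$. There is no tail-$\sigma$-field $0$--$1$ law available here: the VRRW is not Markov, the event $\{|R'|=5\}$ is not known to be measurable with respect to any exchangeable or tail field for which a zero-one law holds, and your claimed dichotomy ``localization on $5$ or full recurrence'' is itself unproven (localization on $6,7,\dots$ sites is not excluded; indeed the paper cannot rule out $6$ even under the stronger hypothesis $\widetilde\beta_c(w)<\infty$). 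For the linear weight this equivalence is precisely Tarr\`es's theorem, whose proof is a long, weight-specific argument; the paper explicitly calls the general case ``possibly one of the most challenging problems on this model.'' Dispatching it in three lines is not a gap one can patch.

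The other two implications also have concrete holes. For $\beta_c(w)<\infty\Rightarrow\pp(|R'|=5)>0$, your escape-attempt computation needs the local time of the endpoint's neighbor (site $\pm1$ in the window) to be controlled by $W^{-1}(2W(\cdot)+\beta)$; the only available a priori bound is $Z_n(-1)\le Z_n(-2)+Z_n(0)\le H(Z_n(-2))$ with $H(x)=x+W^{-1}(W(x)+1)$, which is exactly why the paper's Lemma \ref{stop} produces the series $\sum 1/w(H^{-1}(W^{-1}(2W(n)+\beta)))$ and hence requires $\widetilde\beta_c(w)<\infty$, a condition only conjecturally equivalent to $\beta_c(w)<\infty$. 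Your sketch silently skips this step. For the converse $\pp(|R'|=5)>0\Rightarrow\beta_c(w)<\infty$ (the one direction the paper does prove, as Theorem \ref{positif5}), your reliance on ``a.s.\ convergence of the normalized local-time vector'' via stochastic approximation is both unavailable for general nondecreasing $w$ (the ODE method is tied to linear or near-linear weights) and unnecessary: the paper instead uses the martingales $M_n(x)$, the identity \eqref{eqW}, the monotone process $h_n(2)$, and Lemma \ref{lem.Zn123} to derive the pathwise inequality $W(Z_n(2))\le 2W(Z_n(0))+\beta$ and then bounds $Y_\infty^+(0)$ from below by $c\sum_n 1/w(W^{-1}(2W(n)+\beta))$. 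If you want to salvage something, redo that direction along these martingale lines; the other two implications remain open problems, not exercises.
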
 
\begin{rem}\emph{
As we will later explain further, we also conjecture that in fact $\beta_c(w)$ always belongs to $\{\pm \infty\}$.}
\end{rem}
The hardest part here is the characterization of the almost sure localization, which is a notoriously difficult problem that we 
will not discuss in this paper; we simply recall that in the case of a linear weight, Tarr\`es proved that $|R'|=5$ almost surely \cite{T1,T2}. Proving that the same holds for some other weight function is possibly one of the most challenging problem on this model. Instead we will only be interested here on the easiest part of the conjecture, which is a characterization of the localization with positive probability. Our first result provides one direction of the conjecture:
\begin{theo}
\label{positif5}
Assume that $w$ is nondecreasing. Then 
$$\pp(|R'|=5)> 0 \quad \Longrightarrow \quad \beta_c(w) <+ \infty.$$ 
\end{theo}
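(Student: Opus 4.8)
Since $\{J_\beta(w)<\infty\}$ is a deterministic event, it suffices to exhibit, under the hypothesis $\pp(|R'|=5)>0$, some finite $\beta$ for which $J_\beta(w)<\infty$ is forced almost surely on some event of positive probability. Writing $\{|R'|=5\}=\bigsqcup_{a\in\Z}\{R'=\{a,\dots,a+4\}\}$ and relabelling, we may assume $\pp(E)>0$ where $E:=\{R'=\{-2,-1,0,1,2\}\}$, and we work on $E$ from now on. On $E$, almost surely $Z_n(x)\to\infty$ for $x\in\{-2,\dots,2\}$, while $Z_n(3)$ and $Z_n(-3)$ are eventually constant, and the walk being nearest-neighbour alternates the parity of its position, so the number of visits to the even sites and to the odd sites differ by at most one. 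The walk reaches $3$ only by a step from $2$, and on $E$ it does so only finitely often, so the conditional Borel--Cantelli lemma applied to $(\kF_n)$ gives, with $\sigma_v$ the time of the $v$-th visit to $2$,
\begin{equation*}
\sum_{v\ge1}\frac{w\big(Z_{\sigma_v}(3)\big)}{w\big(Z_{\sigma_v}(1)\big)+w\big(Z_{\sigma_v}(3)\big)}<\infty\qquad\text{a.s. on }E.
\end{equation*}
As $Z_{\sigma_v}(3)$ is eventually constant and $Z_{\sigma_v}(1)\to\infty$, this reduces to $\sum_v 1/w\big(Z_{\sigma_v}(1)\big)<\infty$, and symmetrically $\sum_v 1/w\big(Z_{\sigma'_v}(-1)\big)<\infty$ with $\sigma'_v$ the $v$-th visit to $-2$.

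\textbf{Occupation asymptotics (the key estimate).}
On $E$ the walk eventually ignores $\pm 3$, so between consecutive visits to $0$ it makes essentially one excursion into $\{-1,-2\}$ or into $\{1,2\}$; in particular $Z_n(0)\asymp n$ and $Z_n(\pm2)=o(Z_n(0))$ (a short argument using the previous step and \eqref{twosite}). The increments of $Z_n(2)$ are produced by the rightward steps out of $1$, of conditional probability $w(Z_n(2))/(w(Z_n(0))+w(Z_n(2)))$; concentrating $Z_n(2)$ around its compensator, and using that $w(Z_n(2))$ is negligible compared with $w(Z_n(0))$, one gets the clock relation $dW(Z_n(2))=(1+o(1))\,dZ_n(1)/w(Z_n(0))$, and its mirror image for $Z_n(-2)$ and $Z_n(-1)$. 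Adding these and using $Z_n(1)+Z_n(-1)=Z_n(0)+o(Z_n(0))$ from the parity identity, one obtains (after controlling error terms) the relation $W(Z_n(2))+W(Z_n(-2))=W(Z_n(0))+O(1)$; combined with the edge-crossing bound $Z_n(1)\le Z_n(0)+Z_n(2)+O(1)$ and $Z_n(2)=o(Z_n(0))$, this yields the estimate we really need:
\begin{equation*}
W\big(Z_n(1)\big)\le 2\max\big\{W(Z_n(2)),W(Z_n(-2))\big\}+O(1),
\end{equation*}
together with the symmetric bound for $W\big(Z_n(-1)\big)$. The factor $2$ here is exactly the factor in front of $W$ in the definition of $J_\beta$, and it appears because the central site $0$ is fed by excursions to both neighbouring pairs.

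\textbf{Conclusion.}
Let $M_n:=\max\{Z_n(2),Z_n(-2)\}$, nondecreasing and tending to $\infty$. For each large $j$, the time at which $M$ first equals $j$ is either the $j$-th visit to $2$ occurring while $Z_n(2)\ge Z_n(-2)$, or the $j$-th visit to $-2$ occurring while $Z_n(-2)\ge Z_n(2)$. In the first case $M_n=Z_n(2)=j$, so the key estimate gives $Z_n(1)\le W^{-1}(2W(j)+c)$ for a constant $c$ independent of $j$, hence the $j$-th term of $\sum_v 1/w(Z_{\sigma_v}(1))$ is at least $1/w\big(W^{-1}(2W(j)+c)\big)$; in the second case the $j$-th term of $\sum_v 1/w(Z_{\sigma'_v}(-1))$ has the same lower bound. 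Since those two convergent series have distinct terms indexed by $j$, summing over all (large) $j$ gives
\begin{equation*}
\sum_{j\ge1}\frac{1}{w\big(W^{-1}(2W(j)+c)\big)}<\infty,
\end{equation*}
and, $w$ being nondecreasing so that $x\mapsto 1/w(W^{-1}(2W(x)+c))$ is nonincreasing, this is equivalent to $J_c(w)<\infty$. Hence $\beta_c(w)\le c<+\infty$.

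\textbf{The main obstacle.}
The delicate point is the second step, and precisely the fact that the comparison between $W(Z_n(1))$ and $2\max\{W(Z_n(2)),W(Z_n(-2))\}$ has to hold with an \emph{additive} $O(1)$ error: because $w$ is only nondecreasing, a multiplicative $(1+o(1))$ error, or any constant larger than $2$, would fail to close the argument (it would only give the finiteness of $J_\beta$ with the coefficient $2$ replaced by a larger one, which is strictly weaker). This forces a careful treatment of the three coupled clocks of the inner sites $-1,0,1$, of the martingale fluctuations in the clock relations (which is where hidden growth constraints on $w$, implied by $\pp(|R'|=5)>0$, must be exploited), and of the \emph{a priori unbalanced} competition between the extreme sites $2$ and $-2$ — the latter being exactly why one works with the maximum $M_n$ rather than with $Z_n(2)$ alone, and why the behaviour of the embedded urn on $\{-1,0,1\}$ enters the analysis.
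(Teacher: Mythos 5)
Your overall architecture is close to the paper's: conditional Borel--Cantelli at the boundary sites to produce a convergent series of reciprocal weights, a comparison of the form $W(\text{inner clock})\le 2\,W(\text{extreme clock})+O(1)$, and then a summation that identifies $J_\beta(w)$. Your device of working with $M_n=\max\{Z_n(2),Z_n(-2)\}$ and splitting the index $j$ between the two convergent boundary series is actually a nice touch: combined with the identity $W(Z_n(0))=W(Z_n(2))+W(Z_n(-2))+O(1)$ it would let you bypass the paper's appeal to the fact that $W(Z_n(1))-W(Z_n(-1))$ converges to a \emph{nonzero} limit and the ensuing monotonicity argument for the compensator $h_n$. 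But the proof as written has a genuine gap, and it sits exactly where you locate "the main obstacle": the key estimate is asserted, not proved, and the sketch you give for it does not work for general nondecreasing $w$. Deducing $W(Z_n(1))\le W(Z_n(0))+O(1)$ from the crossing bound $Z_n(1)\le Z_n(0)+Z_n(2)+O(1)$ together with $Z_n(2)=o(Z_n(0))$ fails whenever $w$ is sublinear: a multiplicative error $Z_n(1)\le(1+o(1))Z_n(0)$ only gives $W(Z_n(1))-W(Z_n(0))\lesssim o(Z_n(0))/w(Z_n(0))$, which can diverge (take $w(n)=n^{0.6}$). What is actually needed, and what the paper proves (Lemma \ref{lem.Zn123}) by a separate martingale/clock argument comparing $N_n(2,3)$ with $N_n(1,0)$, is the stronger statement that the central site's local time eventually \emph{exceeds} that of each neighbour, i.e. $Z_n(0)-\max(Z_n(1),Z_n(-1))\to+\infty$, from which $W(Z_n(1))\le W(Z_n(0))$ follows with no error at all.

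Two further ingredients are used implicitly but never established. First, every "concentration around the compensator" step, and in particular the identity $W(Z_n(0))=W(Z_n(2))+W(Z_n(-2))+O(1)$, rests on the convergence of the martingales $M_n(\pm1)$, hence on $\sum_n 1/w(n)^2<\infty$; this is not a hypothesis of the theorem and must itself be extracted from $\pp(|R'|=5)>0$ (the paper's Lemma \ref{5.w.square}, whose proof needs the two-sided observation $Z_n(3)\le Z_n(2)+Z_n(4)+C$ to turn two mixed convergent series into $\sum 1/w(2k+C)^2<\infty$). You gesture at "hidden growth constraints on $w$" but do not derive them. Second, the claim $Z_n(\pm2)=o(Z_n(0))$ does not follow from the facts you cite for a general nondecreasing weight (from $W(Z_n(0))-W(Z_n(2))\to\infty$ one only gets $Z_n(0)-Z_n(2)\to\infty$), and in any case it is the wrong currency: the whole argument lives at the level of additive errors in $W$, not multiplicative errors in $Z$. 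Once these two lemmas are supplied, your max trick does close the argument, and the conclusion $J_c(w)<\infty$ follows as you describe; without them the proof is a correct plan rather than a proof.
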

We note that this result was proved in \cite{BSS2} (see the proof of Proposition 1.4 there) under some additional hypotheses on $w$, including the fact that $w$ was a slowly varying function.

Our second result concerns the other direction. However, instead of $\beta_c(w)$ being finite, one needs to assume some slightly stronger 
condition (which we nevertheless conjecture to be equivalent). Namely, we first define 
 $H(x):= x +W^{-1}(W(x)+1)$, and note that $H$ is increasing and continuous; thus it has an inverse which we denote by $H^{-1}$. Then set for $\beta \in \R$, 
$$\widetilde J_\beta(w) := \int_0^\infty \frac{dx} {w(H^{-1}\left(W^{-1}(2W(x) + \beta)\right))},$$ and 
$$\widetilde \beta_c(w):=\inf\{\beta\in \R\ :\ \widetilde J_\beta(w)<\infty\}\, \in\,  [-\infty,+\infty].$$
Note that $H(x)\ge x$ and $H^{-1}(x)\le x$, for all $x\ge 0$. 
Thus for any $\beta\in \R$, $\widetilde J_\beta(w)\ge J_\beta(w)$. In particular for any $w$, 
$$\widetilde \beta_c(w) \ \ge \ \beta_c(w).$$ 
Our second result is the following: 
\begin{theo}
\label{loc}
Assume that $w$ is nondecreasing and satisfies \eqref{twosite}. Assume further that $\alpha_c(w)=\infty$. Then 
\begin{eqnarray*}
\widetilde \beta_c(w) <\infty \ \Longrightarrow \ \left\{ 
\begin{array}{l} \pp(5\le |R'|<\infty)=1,  \\
                  \pp(|R'|\in \{5,6\})>0.
\end{array}
\right. 
\end{eqnarray*} 
\end{theo}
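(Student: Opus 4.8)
The plan is to prove two facts and combine them: (i) $\pp(|R'|<\infty)=1$, and (ii) $\pp(|R'|\le 6)>0$. Indeed, by the results of \cite{BSS,V} together with our hypotheses one has $|R'|\ge 5$ whenever $|R'|<\infty$ (recall that $\alpha_c(w)=\infty$ precludes localization on four sites \cite{BSS}, while \eqref{twosite} precludes it on two, the remaining small cases being excluded in general); hence (i) gives $\pp(5\le|R'|<\infty)=1$, and (i) together with (ii) gives $\pp(|R'|\in\{5,6\})>0$. Both (i) and (ii) rest on a single trapping estimate. Fix once and for all a real number $\beta$ with $\widetilde J_\beta(w)<\infty$, which exists since $\widetilde\beta_c(w)<\infty$.

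Say that the configuration of the walk at a stopping time $\tau$ is \emph{good of depth $m$} on six consecutive sites $v_1<\dots<v_6$ if $v_1,\dots,v_5$ already carry the five-site occupation profile with all five counts at least $m$ — that is $Z_\tau(v_3)\approx 2Z_\tau(v_2)\approx 2Z_\tau(v_4)$, all much larger than $Z_\tau(v_1)$ and $Z_\tau(v_5)$ — while $Z_\tau(v_6)$ and the counts of the two sites $v_0,v_7$ bordering the window are negligible relative to $m$ in the $W$-scale set by $\beta$ (for instance $Z_\tau(v_0),Z_\tau(v_6),Z_\tau(v_7)\le m$ while the central sites adjacent to them have at least $W^{-1}(2W(m)+\beta)$ visits). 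The trapping estimate is: there is a function $\rho(m)\downarrow 0$ with $\sum_m\rho(m)<\infty$ such that, conditionally on being good of depth $m$ on $v_1,\dots,v_6$ at $\tau$, with probability at least $1-\rho(m)$ the walk never visits $v_0$ or $v_7$ after time $\tau$. On that event $R'\subseteq\{v_1,\dots,v_6\}$, so, together with $|R'|\ge 5$, the walk localizes on $5$ or $6$ sites.

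To prove the trapping estimate I would condition on the walk remaining inside $\{v_1,\dots,v_6\}$ and follow the joint evolution of the six visit counts. On that event the walk is a $6$-site VRRW: it localizes on five sites with the above profile, the central site carrying $\sim n/2$ of the time, the two odd sites $\sim n/4$ each, and the extremal sites $o(n)$. Writing the one-step drifts of the counts and integrating the resulting ODE — the urn / stochastic-approximation heuristic used in \cite{BSS,V} — one gets that when an extremal site has $x$ visits its relevant companion counts are of order $W^{-1}(2W(x)+O(1))$, the factor $2$ being forced by the bipartite balance (central occupation equals twice odd occupation), and that the number of steps the walk has spent near that site, which is the natural clock for its outward attempts, is of order $H(x)=x+W^{-1}(W(x)+1)$. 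Hence the chance that the walk, on its $x$-th visit to an extremal site, steps one site further out is of order $1/w\big(H^{-1}(W^{-1}(2W(x)+\beta))\big)$, and the expected total number of such out-steps after depth $m$ is at most a constant times the tail $\int_m^\infty dx/w\big(H^{-1}(W^{-1}(2W(x)+\beta))\big)$ of $\widetilde J_\beta(w)$, which is finite; a union bound over the two ends then produces $\rho(m)\to 0$. Turning this into a proof is the main obstacle: the ODE heuristic has to be replaced by honest martingale (or second-moment / Rubin-type urn-embedding) estimates controlling the fluctuations of all six counts simultaneously across excursions of unbounded length, with error terms kept summable so that the integrability of $\widetilde J_\beta(w)$ itself — not of some lossy substitute — is what is used; the monotonicity of $w$ together with the precise definition of $H$ are exactly what make this comparison tight.

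Finally I would deduce the theorem from the trapping estimate. For (ii): with positive probability the walk performs, in finite time starting from the empty configuration, a prescribed trajectory that creates a good configuration of some depth $m$ on a fixed six-site window (oscillate inside five sites to inflate their counts while touching the neighbouring sites only boundedly often); conditionally on this, the estimate gives probability $\ge 1-\rho(m)>0$ of never leaving the window, so $|R'|\le 6$ on that event. For (i): $R'$ is always an interval, so by the reduction above it is either finite or an infinite interval; in the second case the walk is, infinitely often, sitting among five consecutive already-visited sites. From \emph{any} configuration in which the walk sits among five consecutive visited sites, there is a constant $c_0>0$, independent of the time, such that with probability $\ge c_0$ the walk first runs a (long but fixed-length) trajectory that builds a good configuration of large depth and is then trapped by the estimate; hence $\pp(|R'|<\infty\mid\kF_n)\ge c_0$ at all such times, and these occur infinitely often almost surely. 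Since $\pp(|R'|<\infty\mid\kF_n)\to\1_{\{|R'|<\infty\}}$ a.s., the L\'evy $0$--$1$ law forces $\pp(|R'|<\infty)=1$. That only $|R'|\in\{5,6\}$ — and not $|R'|=5$ — is claimed reflects a limitation of the method, which pins the walk down only to a window of six; whether its extremal, sixth site is visited infinitely often is precisely the borderline question connected with the conjecture that $\widetilde\beta_c(w)\in\{\pm\infty\}$, not treated here.
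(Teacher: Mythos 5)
Your high-level architecture is the same as the paper's: prove localization on a six-site window with positive probability, combine with the known lower bound $|R'|\ge 5$ (from $\alpha_c(w)=\infty$ and \eqref{twosite}), and get almost sure finiteness of $R'$ by showing that at each visit to a new site there is a uniformly positive conditional probability of trapping, then invoking conditional Borel--Cantelli / the L\'evy $0$--$1$ law. You have also correctly identified why $H$ and the quantity $W^{-1}(2W(x)+\beta)$ enter, and why the tail of $\widetilde J_\beta(w)$ is the right series to sum. But the proposal has a genuine gap, and you name it yourself: the trapping estimate is derived only from an ODE/urn heuristic, and ``turning this into a proof is the main obstacle.'' That obstacle \emph{is} the theorem. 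The paper's Lemma \ref{stop} is precisely this missing step, and it is carried out not by controlling all six counts simultaneously (as you propose, and which would be substantially harder), but by analyzing one boundary at a time: one studies the walk \emph{reflected} on the four sites $\{-3,\dots,0\}$, introduces the martingales $M_n(-1)$, $M_n(-2)$ and stopping times $T_0,\dots,T_3$ that detect when the ratio $W(Z_n(-2))-W(Z_n(0))$ drifts out of a small window, controls their fluctuations by Doob's $L^2$ inequality (using $\sum 1/w(n)^2<\infty$, which itself must first be deduced from $\beta_c(w)<\infty$ --- Lemma \ref{lem.hypsquare}, a step absent from your plan), and bounds $Y^+_\infty(-3)$ by the tail of $\widetilde J_\beta(w)$. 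The passage from the reflected walk to the unreflected one is then done by the monotone coupling of Rubin's construction (Lemma 3.6 of [BSS]) applied at each end separately, plus Lemma 3.7 of [BSS]. None of this machinery appears, even in outline, in your proposal.

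A second, independent gap is in your argument for $\pp(|R'|<\infty)=1$. You claim that from \emph{any} configuration in which the walk sits among five consecutive visited sites there is a time-independent constant $c_0>0$ probability of first \emph{building} a good configuration of large depth by a fixed-length trajectory and then being trapped. This uniformity is not justified: a good configuration of depth $m$ imposes quantitative relations between the six local times (and smallness of the bordering ones), and from an arbitrary current configuration --- e.g.\ one where the site you want as a border already carries an enormous local time --- no bounded-length trajectory can produce it. The paper avoids this by a two-case argument at each newly visited site $x$: if the local time at $x+1$ is below a fixed threshold $N_0$, the walk jumps to $x-1$ and restarts an essentially fresh window $\{x-7,\dots,x-1\}$; if it is above $N_0$, then the local time at $x+2$ is automatically above $N_0$ as well, so the \emph{existing} configuration already lies in the admissible set $\bs\kC_{N,\eta,\beta}$ of Lemma \ref{stop} and no building is needed. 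Your plan needs an analogous case analysis to make the constant $c_0$ genuinely uniform.
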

As mentioned above we conjecture that in fact $\beta_c(w) = \widetilde \beta_c(w)$, for all weights $w$. 
We provide some evidence for this fact at the end of the paper, and show that it is true for a large class of weight functions 
(see Lemmas \ref{surlinear} and \ref{sublinear}).

In particular Lemma \ref{surlinear} shows that for any surlinear weight function, such that $w(n)=o(n\sqrt{\log n})$, one has 
$\tilde \beta_c(w)=-\infty$. This is of course not surprising, regarding the known result for a linear weight, but we stress that prior to this, not much was known for weights with intermediate growth between linear and $n\log \log n$. Indeed, in \cite{BSS} it was only proved that for weights satisfying $w(n)=o(n\log \log n)$, $\alpha_c(w) = \infty$, and localization on $4$ or less sites was impossible.

It might look a bit disappointing that we cannot exclude the possibility of a localization on $6$ sites in the conclusion of Theorem \ref{loc}, especially since for a linear weight  
as well as for weights satisfying $w(n)\sim n/\exp(\log^\alpha n)$, with $\alpha\in (0,1/2)$, it was proved respectively in \cite{PV,T2} and \cite{BSS2}, that localization on $5$ sites occurs with positive probability. 
Let us however observe that in both cases the proofs rely heavily on the explicit form of the weight function and cannot be transposed (at least not directly)
to the general setting we are considering here.

Finally we also believe that localization on any even number of sites, larger than or equal to $6$, is not possible for any weight function. In contrast it was proved in \cite{BSS2} that localization on any odd number of sites -- other than one and three -- is possible.

The paper is organized as follows. In the next section, we recall some important and elementary facts about the VRRW, and some related martingales attached to each site. Then in Sections $3$ and $4$ we give the proofs of Theorems \ref{positif5} and \ref{loc} respectively. The final section is concerned with the computation of the parameters $\beta_c(w)$ and $\tilde \beta_c(w)$, and gives some cases where one can show equality between them. 

\section{Notation and background}

\subsection{VRRW}
Given some initial distribution of local times $\mathcal{C} :=(z_0(y))_{y\in \Z}\in \N^\Z$, we define the $\mathcal C$-VRRW as the process $(X_n)_{n\ge 0}$, 
whose transition probabilities are given by \eqref{transiVRRW}, with for any $y\in \Z$, $Z_0(y)=z_0(y)$, and for any $n\ge 1$, 
$$Z_n(y):=z_0(y)+\sum_{k=1}^n \1\{X_k=y\}.$$
We denote by $\pp_\kC$ the law of the $\mathcal C$-VRRW. 
We call $\kC_0$ the configuration with $z_0(y)=0$, for all $y\neq 0$ and $z_0(0)=1$. We then simply say that $X$ is a VRRW when its initial local time distribution is given by $\kC_0$, and denote its law by $\pp$. 
We also recall that a $\kC$-VRRW can be defined as well on any subgraph of $\Z$, and we refer to \cite{BSS} for details.  

\subsection{The martingales $M_n(x)$}\label{section3.2}
For $x\in \Z$, define $Z_\infty(x) := \lim_{n\to \infty} Z_n(x)$.
Recall that $R'$ stands for the set of sites visited infinitely
often by the walk:
$$R' :=\{x\in \Z\ :\ Z_\infty(x)=\infty\}.$$
We define for any $n\ge 1$, and $x\in \Z$, 
\begin{equation}\label{defY}
Y_n^{\pm}(x) := \sum_{k=0}^{n-1} \frac{\1\{X_k=x,\, 
X_{k+1}=x\pm 1\}}{w(Z_k(x\pm 1))},
\end{equation}
 and
$$M_n(x) := Y_n^+(x)-Y_n^-(x).$$
We let also $Y_0^\pm(x)=0$, and $M_0(x)=0$, and consider the limits: 
 $$
Y_\infty^\pm(x):= \lim_{n\to \infty} Y_n^{\pm}(x).
$$
An important observation from Tarr\`es \cite{T1,T2} is that
$(M_n(x))_{n\ge 1}$ is a martingale for each $x\in \Z$. Moreover, if
\begin{equation}\label{hyp.square}
\sum_{n=0}^{\infty} \frac{1}{w(n)^2}<\infty,
\end{equation}
then these martingales are bounded in $L^2$, and thus converge almost surely 
and in $L^2$. 
Moreover, for any $\mathcal{C}$-VRRW, one has 
\begin{equation}\label{eqW}
Y_n^+(x-1) + Y_n^-(x+1) = W(Z_{n}(x)) -W(z_0(x)). 
\end{equation}
\noindent We will also use the following result due to Tarr\`es (see also \cite[Lemma 3.3]{BSS}):  
\begin{lem}[Tarr\`es \cite{T2}] \label{Yfini} 
Assume that $w$ is nondecreasing and that \eqref{hyp.square} holds. Then, for any $x\in \Z$, almost surely,
$$\{Y_\infty^+(x)<\infty\}=\{Y_\infty^-(x)<\infty\}\ = \ \{Z_\infty(x-1)<\infty\} \cup \{Z_\infty(x+1)<\infty\}.$$
\end{lem}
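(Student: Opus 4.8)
The plan is to exploit the martingale structure of $M_n(x) = Y_n^+(x) - Y_n^-(x)$ together with the identity \eqref{eqW}, which links the $Y$'s at neighboring sites to $W(Z_n)$. First I would show the easy inclusions. Since $w$ is nondecreasing and \eqref{hyp.square} holds, $M_n(x)$ converges a.s., so on the event $\{Y_\infty^+(x)<\infty\}$ we automatically get $\{Y_\infty^-(x)<\infty\}$ and vice versa; hence the first two events coincide a.s. For the inclusion into $\{Z_\infty(x-1)<\infty\}\cup\{Z_\infty(x+1)<\infty\}$: if both neighbors of $x$ were visited infinitely often, then $x$ itself is visited infinitely often, and by \eqref{eqW} applied at $x-1$ and at $x+1$ one has $Y_\infty^-(x) \ge$ a tail sum of $1/w(Z_k(x))$ terms accumulated along excursions, which diverges because $\sum 1/w(n) = \infty$ when $Z_\infty(x)=\infty$; more carefully, $Y_n^+(x-1)+Y_n^-(x+1) = W(Z_n(x)) - W(z_0(x)) \to \infty$, so at least one of $Y_\infty^+(x-1)$, $Y_\infty^-(x+1)$ is infinite, and then a short argument (each visit to $x-1$ coming from $x$ or going to $x$, comparing $Z_k(x)$ and $Z_k(x-1)$, which differ by a bounded amount whenever both are being incremented) transfers infiniteness to $Y_\infty^\pm(x)$. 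So $\{Y_\infty^+(x)<\infty\}$ forces one of the neighbors to be visited finitely often.

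The substantive direction is the reverse: if $Z_\infty(x-1)<\infty$ (say), then $Y_\infty^+(x)<\infty$ a.s. Here is where I expect the real work. The point is that once $x-1$ stops being visited, the walk at $x$ almost always jumps to $x+1$; each jump from $x$ to $x-1$ contributes $1/w(Z_k(x-1))$ to $Y_n^+(x)$, but since $Z_k(x-1)$ is eventually constant, say equal to $N$, these contributions are each of fixed size $1/w(N)$ — so finitely many of them sum to a finite quantity. The only subtlety is ruling out that infinitely many such jumps happen: but each jump from $x$ to $x-1$ would increment $Z(x-1)$, contradicting $Z_\infty(x-1)<\infty$. Thus on $\{Z_\infty(x-1)<\infty\}$ the sum $Y_\infty^+(x)$ has only finitely many nonzero terms plus, if $x+1$ is also visited finitely often, finitely many more — either way it is finite. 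Symmetrically $Z_\infty(x+1)<\infty$ gives $Y_\infty^-(x)<\infty$, hence (using the already-established equality of the two events) $Y_\infty^+(x)<\infty$ too.

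Combining: the union $\{Z_\infty(x-1)<\infty\}\cup\{Z_\infty(x+1)<\infty\}$ is contained a.s. in $\{Y_\infty^+(x)<\infty\}$ by the previous paragraph, and conversely $\{Y_\infty^+(x)<\infty\}$ is contained in that union by the first paragraph, giving equality of all three events. I would also invoke Lemma \ref{Yfini} essentially verbatim as it appears in Tarrès \cite{T2} and \cite[Lemma 3.3]{BSS}, so the exposition can be kept brief: the main obstacle, and the only place needing care, is the direction turning $Z_\infty(\text{neighbor})=\infty$ into $Y_\infty^\pm(x)=\infty$ via \eqref{eqW}, since one must argue that the divergence of $W(Z_n(x))$ is actually carried by the $Y$-term attached to $x$ and not lost to the other neighbor — this is handled by noting $Z_k(x-1)$ and $Z_k(x)$ increase together (differing by at most $z_0$ plus one) during the relevant excursions, so $\sum 1/w(Z_k(x-1))$ and $\sum 1/w(Z_k(x))$ diverge simultaneously.
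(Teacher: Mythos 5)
This lemma is quoted in the paper with an attribution to Tarr\`es \cite{T2} (see also \cite[Lemma 3.3]{BSS}); the paper gives no proof of it, so the only move that matches the paper is the citation you mention at the end. The parts of your sketch that are correct are the easy ones: the a.s.\ equality $\{Y_\infty^+(x)<\infty\}=\{Y_\infty^-(x)<\infty\}$ from the $L^2$-bounded martingale $M_n(x)$, and the inclusion $\{Z_\infty(x-1)<\infty\}\cup\{Z_\infty(x+1)<\infty\}\subseteq\{Y_\infty^\pm(x)<\infty\}$ (finitely many jumps across the relevant edge, each contributing at most $1/w(0)$, then transfer to the other sign via the martingale). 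Minor point: you repeatedly attach the wrong neighbor to the wrong sign --- a jump from $x$ to $x-1$ contributes $1/w(Z_k(x-1))$ to $Y_n^-(x)$, not to $Y_n^+(x)$ --- but this does not affect the logic since the two events coincide a.s.

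The genuine gap is in the hard direction, $\{Z_\infty(x-1)=Z_\infty(x+1)=\infty\}\Rightarrow Y_\infty^\pm(x)=\infty$, which is precisely the content that makes this a named lemma of Tarr\`es rather than a one-line remark. Your identity $Y_n^+(x-1)+Y_n^-(x+1)=W(Z_n(x))-W(z_0(x))\to\infty$ only gives divergence of a $Y$-quantity attached to the \emph{neighbors} of $x$, and the ``transfer'' you invoke rests on the claim that $Z_k(x)$ and $Z_k(x-1)$ ``differ by a bounded amount whenever both are being incremented,'' so that $\sum 1/w(Z_k(x-1))$ and $\sum 1/w(Z_k(x))$ diverge simultaneously. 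That claim is false: adjacent local times are in general not within a bounded distance of each other (in the five-site localization regime one has $Z_n(0)\sim n/2$ while $Z_n(\pm1)\sim n/4$, and a site adjacent to a rarely-visited one can have local time of a completely different order). The difficulty you would have to overcome is exactly that $Z_k(x-1)$ may be driven mostly by crossings of the edge $(x-2,x-1)$ rather than of $(x-1,x)$, in which case the terms $1/w(Z_{\tau_j}(x-1))$ along the jumps $x\to x-1$ could a priori be summable even though $W(Z_n(x-1))\to\infty$; ruling this out requires the more elaborate bookkeeping with \eqref{eqW} and the martingales at several consecutive sites that Tarr\`es carries out. As written, your sketch does not close this step, so it should be replaced by the citation rather than presented as a proof.
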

We further use the same notation as in \cite{T2}, and write $f(n)\equiv g(n)$, when the sequence $(f(n)-g(n))_n$ converges to some finite real.
In particular, it follows from the above discussion that  
\begin{eqnarray}\label{Ypm.conv}
\sum_{n\ge 0} \frac{1}{w(n)^2}<\infty\quad \Longrightarrow\quad Y_n^+(x)\equiv Y_n^-(x), \text{ for all }x\in \Z.
\end{eqnarray}

\section{Proof of Theorem \ref{positif5}}
We start the proof with the following lemma: 
\begin{lem}\label{5.w.square}
Assume that $w$ is nondecreasing. Then 
$$\pp(|R'|=5)> 0 \quad \Longrightarrow \quad \sum_{n=0}^\infty \frac 1{w(n)^2} <\infty.$$
\end{lem}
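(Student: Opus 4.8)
\noindent\emph{Proof plan.} The plan is to argue by contradiction, after two reductions. If $w$ is bounded, then the transition probabilities in \eqref{transiVRRW} are bounded away from $0$ and $1$, the walk is recurrent, and $\pp(|R'|=5)=0$; and if $\sum_n 1/w(n)<\infty$, then $1/w(n)^2\le w(0)^{-1}/w(n)$ forces $\sum_n 1/w(n)^2<\infty$ and there is nothing to prove. So I may assume that $w$ is unbounded and that \eqref{twosite} holds, and suppose for contradiction that $\sum_n 1/w(n)^2=\infty$ while $\pp(|R'|=5)>0$. By countable additivity it suffices to derive a contradiction on a fixed event $\Omega_{x_0}:=\{R'=\{x_0-2,\dots,x_0+2\}\}$ of positive probability. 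On $\Omega_{x_0}$ the walk is confined to $\{x_0-2,\dots,x_0+2\}$ after some almost surely finite time $N$; the sites $x_0\pm 2$ are visited infinitely often whereas $x_0\pm 3$ are not, so for $k\ge N$ every visit to $x_0+2$ is followed by a step to $x_0+1$, and every visit to $x_0-2$ by a step to $x_0-1$.

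The main tool is the martingale $M_n(x)$ together with the elementary identity, read off from \eqref{transiVRRW} and \eqref{defY}, that for all $x$ and $k$
$$\E\big[(M_{k+1}(x)-M_k(x))^2\,\big|\,\kF_k\big]=\frac{\1\{X_k=x\}}{w(Z_k(x-1))\,w(Z_k(x+1))},$$
so that the predictable quadratic variation of $M(x)$ is $\langle M(x)\rangle_n=\sum_{k<n,\,X_k=x}\big(w(Z_k(x-1))w(Z_k(x+1))\big)^{-1}$, while its increments are bounded by $1/w(0)$. I will also use the classical facts that for any martingale the event $\{M_n\text{ converges}\}$ contains $\{\sup_nM_n<\infty\}$ (and $\{\inf_nM_n>-\infty\}$), that for a martingale with bounded increments it coincides with $\{\langle M\rangle_\infty<\infty\}$, and that on its complement $\limsup_nM_n=+\infty$ and $\liminf_nM_n=-\infty$. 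Applying this at the endpoint $x_0+2$: on $\Omega_{x_0}$ the quantity $Y_n^+(x_0+2)$ is eventually constant, so $M_n(x_0+2)=Y_n^+(x_0+2)-Y_n^-(x_0+2)$ is bounded above, hence converges; therefore $\langle M(x_0+2)\rangle_\infty<\infty$, and since $Z_k(x_0+3)$ is eventually constant this is exactly the statement $Y_\infty^-(x_0+2)<\infty$. By the mirror argument $Y_\infty^+(x_0-2)<\infty$. Feeding these into \eqref{eqW} at $x_0+1$ and at $x_0-1$ and using \eqref{twosite}, one gets $Y_n^+(x_0)\equiv W(Z_n(x_0+1))$ and $Y_n^-(x_0)\equiv W(Z_n(x_0-1))$, and hence
$$M_n(x_0)\ \equiv\ W(Z_n(x_0+1))-W(Z_n(x_0-1)).$$

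Next I would establish the local-time picture on $\Omega_{x_0}$, namely that the walk spends asymptotically all its time on the three central sites $x_0-1,x_0,x_0+1$, with $Z_n(x_0\pm2)=o(Z_n(x_0))$ and $\liminf_n Z_n(x_0)/n>0$; this is the analogue of the Pemantle--Volkov description and follows from the transition rule, which makes the step $x_0+1\to x_0+2$ increasingly unlikely since $Z_n(x_0)$ dominates $Z_n(x_0+2)$, together with $Y_\infty^-(x_0+2)<\infty$. Granting this, a visit-by-visit estimate (using $Z_k(x_0\pm1)\le Z_k(x_0)+Z_k(x_0\pm2)+O(1)\le (1+o(1))Z_k(x_0)$, and that $Z_k(x_0)$ is the number of earlier visits to $x_0$) gives
$$\langle M(x_0)\rangle_\infty=\sum_{k:\,X_k=x_0}\frac1{w(Z_k(x_0-1))\,w(Z_k(x_0+1))}\ \ge\ \sum_{j}\frac1{w\big((1+o(1))j\big)^2}=\infty,$$
where the divergence uses that $w$ is nondecreasing and that $\sum_n 1/w(n)^2=\infty$. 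Since $M(x_0)$ has bounded increments, it follows that, almost surely on $\Omega_{x_0}$, $\limsup_n M_n(x_0)=+\infty$ and $\liminf_n M_n(x_0)=-\infty$; in other words $W(Z_n(x_0+1))-W(Z_n(x_0-1))$ oscillates unboundedly.

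The heart of the matter --- and the step I expect to be the main obstacle --- is to rule this out: on $\Omega_{x_0}$ the martingale $M_n(x_0)$ must in fact converge, which contradicts the preceding paragraph and finishes the proof. Heuristically, $M_n(x_0)\equiv W(Z_n(x_0+1))-W(Z_n(x_0-1))$ is precisely the martingale attached to the generalized P\'olya urn run by the walk at $x_0$ between the local times at $x_0-1$ and $x_0+1$; once this imbalance is large the transition probabilities at $x_0$ overwhelmingly favour the leading neighbour, so the imbalance cannot come back down through a window of fixed width. Making this rigorous requires a genuine quantitative input --- for instance, constructing from the excursion dynamics an auxiliary (super)martingale that bounds the probability of $M(x_0)$ travelling from level $+K$ back to level $-K$ --- and it is here, rather than in the martingale bookkeeping above, that the real work lies; the nondecreasing hypothesis on $w$ is used crucially at this point.
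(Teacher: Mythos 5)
Your proposal is not a complete proof: the step you yourself flag as ``the heart of the matter'' --- ruling out that $M_n(x_0)\equiv W(Z_n(x_0+1))-W(Z_n(x_0-1))$ oscillates with $\limsup=+\infty$ and $\liminf=-\infty$ on the localization event --- is exactly the hard part, and you give only a heuristic for it. No construction of the announced auxiliary supermartingale is offered, and it is not clear how to build one in this generality (note that you have placed yourself in the regime $\sum_n 1/w(n)^2=\infty$, where the martingales $M_n(x)$ are \emph{not} $L^2$-bounded, so none of the usual convergence machinery of the paper is available). A second, smaller gap: the ``local-time picture'' ($Z_n(x_0\pm2)=o(Z_n(x_0))$ and $\liminf_n Z_n(x_0)/n>0$) is asserted by analogy with Pemantle--Volkov, but the standard derivations of such statements (e.g.\ Lemma \ref{lem.Zn123} in this paper) again rely on \eqref{hyp.square}, which you are assuming fails. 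So as written the argument is a plan whose two nontrivial steps are both unproven.

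The paper's proof avoids the oscillation dichotomy at the central site entirely, and you may find it instructive to compare. It works at the sites adjacent to the endpoints: on the localization event, conditional Borel--Cantelli gives $Y_\infty^+(1)<\infty$ (the walk never steps to $0$), and the bounded-increment martingale $Y_n^+(1)-\sum_{k\le Z_n(2)-z_0(2)}p_k(2,1)/w(k+z_0(2))$ then forces $\sum_k p_k(2,1)/w(k+z_0(2))<\infty$. Since $p_k(2,1)\ge c/w(Z_{\tau_k}(3))$, this yields $\sum_k \bigl(w(k+z_0(2))\,w(Z_{\tau_k}(3))\bigr)^{-1}<\infty$, and symmetrically at site $4$. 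The deterministic inequality $Z_n(3)\le Z_n(2)+Z_n(4)+C$ then shows that for each $k$ at least one of $Z_{\tau_k}(3)$, $Z_{\tilde\tau_k}(3)$ is at most $2k+C$, and monotonicity of $w$ gives $\sum_k 1/w(2k+C)^2<\infty$ directly. This pigeonhole step is the idea your outline is missing: it converts the two one-sided summability statements into the two-sided bound without ever having to decide whether the central martingale converges.
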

\begin{rem}
\emph{This result has the same flavor than some others from \cite{Sch,S,V}, which all give different conditions on the weight $w$, ensuring that localization on any finite subgraph is not possible. In particular the proof in \cite{S} shows that for any weight satisfying $\limsup n/w(n)^2=\infty$, the walk cannot localize on any finite subgraph, which is close to imply our result (but not quite).}
\end{rem}
\begin{proof}[Proof of Lemma \ref{5.w.square}] We first note that if localization on five sites occurs with positive probability, then there exists some initial configuration $\kC$, such that with positive probability the $\kC$-VRRW spends all its time in the set $\{1,2,3,4,5\}$, and visits all sites from this set infinitely often. Call $E$ this event. By the conditional Borel-Cantelli Lemma (see Theorem 4.3.2 in \cite{Dur}), one can see  that almost surely on the event $E$, one has $Y_\infty^+(1)<\infty$, since for some constant $c>0$ (only depending on $\kC$), one has 
$$Y_\infty^+(1) \le c\sum_{k\ge 0} \pp\left[X_{k+1} = 0 \mid \kF_k\right] \1\{X_k=1\},$$
where we denote here by $\kF_k$ the sigma field generated by the process $X$ up to its $k$-th visit to site $1$. Then we use that the following process is a martingale (for a very similar reason as for $M_n(x)$):
$$Y_n^+(1) - \sum_{k=1}^{Z_n(2)-z_0(2)} \frac{p_k(2,1)}{w(k+z_0(2))},$$ 
where $p_k(2,1)$ denotes the probability to jump to site $1$ at $k$-th visit to site $2$. Since this martingale has bounded increments, we know that almost surely, either it converges, or its $\limsup$ as well as its $\liminf$ are both infinite (see Theorem 4.3.1 in \cite{Dur}). 
However, we have just observed that on the event $E$, its $\limsup$ is finite, which means that it must converge, and as a consequence on the event $E$, it holds almost surely 
$$ \sum_{k=1}^{\infty} \frac{p_k(2,1)}{w(k+z_0(2))} <\infty.$$
Now by definition of $p_k(2,1)$, one has for some constant $c>0$ (depending only on $\kC$), and on $E$,  
$$\sum_{k=1}^{\infty} \frac{1}{w(k+z_0(2))w(Z_{\tau_k}(3))}  \le c \sum_{k=1}^{\infty} \frac{p_k(2,1)}{w(k+z_0(2))}<\infty,$$
where $\tau_k$ denotes the time of $k$-th visit to site $2$.  
By symmetry one has as well  
$$\sum_{k=1}^{\infty} \frac{1}{w(k+z_0(4))w(Z_{\tilde \tau_k}(3))} <\infty,$$
with $\tilde \tau_k$ the time of $k$-th visit to site $4$. Finally observe that for any $n$, $Z_n(3) \le Z_n(2) + Z_n(4)+C$, with $C$ a constant depending only on $\kC$. This implies that for any $k$, either $Z_{\tau_k}(3)\le 2k+C+z_0(3)$  or $Z_{\tilde \tau_k}(3) \le 2k+C+z_0(3)$. Using that $w$ is nondecreasing, it follows that for some (possibly larger) constant $C>0$, 
$$\sum_{k\ge 0} \frac 1{w(2k+C)^2} <\infty.$$
The lemma follows, using again that $w$ is nondecreasing.    
\end{proof}
We next prove the following result. 
\begin{lem} \label{lem.Zn123}
Let $X$ be a $\kC$-VRRW, for some initial local time configuration $\kC$. Assume that $w$ is nondecreasing, and satisfies \eqref{twosite} and \eqref{hyp.square}. 
Then on the event $E=\{Z_\infty(0)=Z_\infty(4)=\infty\}\cap \{Y_\infty^+(0)<\infty\} \cap \{Y_\infty^-(4)<\infty\}$, it holds almost surely 
$$Z_n(2)-\max(Z_n(1),Z_n(3)) \to + \infty, \qquad \textrm{as }n\to \infty.$$
\end{lem}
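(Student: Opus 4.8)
\noindent\emph{Overall strategy and Step 1 (the geometry).}
The plan is to show first that on $E$ the walk localises on exactly $\{0,1,2,3,4\}$, then to derive the asymptotic identities linking the five local times, and finally to show that sites $0$ and $4$ carry a negligible amount of local time. For Step 1, note that $R'$ is an interval containing $0$ and $4$, so automatically $Z_\infty(1)=Z_\infty(2)=Z_\infty(3)=\infty$. Applying Lemma \ref{Yfini} at $x=0$, the identity $\{Y_\infty^+(0)<\infty\}=\{Z_\infty(-1)<\infty\}\cup\{Z_\infty(1)<\infty\}$ together with $Y_\infty^+(0)<\infty$ (which holds on $E$) and $Z_\infty(1)=\infty$ forces $Z_\infty(-1)<\infty$; symmetrically $Z_\infty(5)<\infty$. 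Hence $R'=\{0,1,2,3,4\}$.

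\medskip
\noindent\emph{Step 2 (asymptotic identities and the reduction).}
Next I would record, on $E$ and with $\equiv$ as in \eqref{Ypm.conv}: from \eqref{eqW} at $x=0,\dots,4$, using that $Y_n^+(-1)$ and $Y_n^-(5)$ are eventually constant (sites $-1,5$ being visited finitely often), that $Y_\infty^+(0),Y_\infty^-(4)<\infty$ on $E$, and \eqref{Ypm.conv},
$$Y_n^{\pm}(1)\equiv W(Z_n(0)),\quad Y_n^{\pm}(2)\equiv W(Z_n(1))\equiv W(Z_n(3)),\quad Y_n^{\pm}(3)\equiv W(Z_n(4)),$$
$$W(Z_n(2))\equiv Y_n^+(1)+Y_n^-(3)\equiv W(Z_n(0))+W(Z_n(4)),$$
and $W(Z_n(x))\to\infty$ for each $x\in\{0,\dots,4\}$ by \eqref{twosite}. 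Writing $N_n(x\to y)$ for the number of $k<n$ with $X_k=x$, $X_{k+1}=y$, the identities $Z_n(x)=z_0(x)+N_n(x\to x-1)+N_n(x\to x+1)$, the elementary bound $|N_n(a\to b)-N_n(b\to a)|\le 1$, and the boundedness of $N_n$ across the edges $\{-1,0\}$ and $\{4,5\}$, give the conservation relation $Z_n(2)+Z_n(0)+Z_n(4)\equiv Z_n(1)+Z_n(3)$. Therefore
$$Z_n(2)-\max\big(Z_n(1),Z_n(3)\big)\ \equiv\ \min\big(Z_n(1),Z_n(3)\big)-Z_n(0)-Z_n(4),$$
and it remains to prove that the right-hand side tends to $+\infty$.

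\medskip
\noindent\emph{Step 3 (the boundary sites are negligible).}
The point process counting the steps $0\to 1$ has predictable compensator $\sum_{k<n}\mathbf 1\{X_k=0\}(w(Z_k(-1))+w(Z_k(1)))^{-1}$, so $Y_n^+(0)$ minus this compensator is a martingale with bounded increments; since $Y_\infty^+(0)<\infty$ on $E$, the argument of Lemma \ref{5.w.square} (Theorem 4.3.1 of \cite{Dur}) shows the compensator converges, and since $Z_k(-1)$ is eventually constant this gives $\sum_m 1/w(\rho_m)<\infty$, where $\rho_m$ is the value of $Z(1)$ at the $m$-th visit of the walk to $0$ (after localisation). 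As $(\rho_m)$ is nondecreasing, $\sum_{l\le m}1/w(\rho_l)\ge m/w(\rho_m)$, so $Z_n(0)\le C\,w(Z_n(1))$ for a (random) $C$, and symmetrically $Z_n(4)\le C\,w(Z_n(3))$. When $w$ is at most linear this can be strengthened to $Z_n(0)=o(Z_n(1))$ and $Z_n(4)=o(Z_n(3))$: if $\rho_m\le Cm$ along a subsequence, extract $m_1<m_2<\cdots$ with $m_{j+1}\ge 2Cm_j$; since $\rho_l\le\rho_{m_j}\le Cm_j$ for $l\in[m_j/(2C),m_j]$, the block $[m_j/(2C),m_j]$ contributes at least $m_j/(2\,w(Cm_j))$ to $\sum_l 1/w(\rho_l)$, which is bounded below by a positive constant when $w(Cm_j)=O(m_j)$; summing over the disjoint blocks contradicts $\sum_m 1/w(\rho_m)<\infty$, so $\rho_m/m\to\infty$ and hence $Z_n(0)=o(Z_n(1))$, $Z_n(4)=o(Z_n(3))$. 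A parallel excursion count---``between two visits to $2$ the walk visits $2$ at least as often as it visits $1$, minus one''---upgrades this further to $Z_n(0),Z_n(4)=o(Z_n(2))$.

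\medskip
\noindent\emph{Step 4 (conclusion) and the main difficulty.}
When $w(n)=O(n)$ the proof is complete: then $W$ dominates a multiple of $\log$, so $W(Z_n(1))\equiv W(Z_n(3))$ forces $Z_n(1)\asymp Z_n(3)$, and with Step 3 this gives $Z_n(0)+Z_n(4)=o(\min(Z_n(1),Z_n(3)))$, hence $\min(Z_n(1),Z_n(3))-Z_n(0)-Z_n(4)\to+\infty$. The genuinely delicate case---and, I expect, the main obstacle---is that of a super-linear weight, where $W(Z_n(1))\equiv W(Z_n(3))$ no longer pins down the ratio $Z_n(1)/Z_n(3)$: one must rule out the unbalanced profiles in which one of the two ``second'' sites $1,3$ carries only $o(Z_n(2))$ of the local time while the opposite boundary site does not. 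I would attack this by a finer analysis of the excursions of the walk between successive visits to site $2$, using the identity $W(Z_n(2))\equiv W(Z_n(0))+W(Z_n(4))$ together with $Z_n(0),Z_n(4)=o(Z_n(2))$ to show that each of sites $1$ and $3$ must carry a positive proportion of $Z_n(2)$; once $\min(Z_n(1),Z_n(3))\ge c\,Z_n(2)$ for some $c>0$ is known, $\min(Z_n(1),Z_n(3))-Z_n(0)-Z_n(4)\to+\infty$ follows at once.
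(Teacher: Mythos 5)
Your Steps 1 and 2 are sound, and the reduction via the conservation relation $Z_n(0)+Z_n(2)+Z_n(4)\equiv Z_n(1)+Z_n(3)$ is a correct and clean reformulation of the problem (the paper's proof likewise passes through the transition counts $N_n(y,y\pm1)$). But there is a genuine gap, which you yourself flag in Step 4: everything in Steps 3--4 beyond the bound $Z_n(0)\le C\,w(Z_n(1))$ --- namely $\rho_m/m\to\infty$, hence $Z_n(0)=o(Z_n(1))$, and the deduction $Z_n(1)\asymp Z_n(3)$ from $W(Z_n(1))\equiv W(Z_n(3))$ --- requires $w(n)=\kO(n)$. Under the lemma's actual hypotheses, \eqref{twosite} and \eqref{hyp.square} together still allow superlinear weights (anything between roughly $\sqrt n$ and $n\log n$), and for, say, $w(n)\asymp n\log n$ the function $W$ grows like an iterated logarithm, so $W(Z_n(1))\equiv W(Z_n(3))$ says essentially nothing about the ratio $Z_n(1)/Z_n(3)$. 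Your proposed treatment of this regime (``a finer analysis of the excursions \dots\ to show that each of sites $1$ and $3$ must carry a positive proportion of $Z_n(2)$'') is a statement of intent, not an argument; the proof is therefore incomplete exactly where you predict the main obstacle to lie.

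The paper sidesteps this difficulty entirely with a monotonicity device that is uniform in $w$. Having established $W(N_n(2,1))\equiv Y_n^-(2)\equiv Y_n^+(2)\equiv W(N_n(2,3))$ (using $Y_\infty^+(0)<\infty$, $Y_\infty^-(4)<\infty$ and \eqref{Ypm.conv}) and $W(N_n(1,0))\equiv Y_n^+(1)$, it considers $\delta_n:=W(N_n(1,2))-Y_n^+(1)$. This is nondecreasing, since a jump $1\to2$ that is the $(j+1)$-th of its kind increases $W(N_n(1,2))$ by $1/w(j)$ and $Y_n^+(1)$ by $1/w(Z_n(2))$ with $Z_n(2)\ge j$; and its limit dominates $Y_\infty^-(3)$, which is infinite by Lemma \ref{Yfini} because $Z_\infty(2)=Z_\infty(4)=\infty$. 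Hence $W(N_n(2,3))-W(N_n(1,0))\to+\infty$, so $N_n(2,3)-N_n(1,0)\to+\infty$ (since $w\ge w(0)>0$), and $Z_n(2)-Z_n(1)\to+\infty$ follows from \eqref{ZNn}; symmetry handles $Z_n(3)$. Note that this never compares $Z_n(1)$ with $Z_n(3)$ and never needs the boundary local times to be $o$ of anything. In your formulation, the missing ingredient is precisely the statement $N_n(2,1)-N_n(1,0)\to+\infty$, and it is this comparison of \emph{edge} crossings --- rather than of vertex local times --- that makes the superlinear case tractable.
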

\begin{proof}
Let for $n\ge 1$,
$$N_n(y,y\pm 1):=\sum_{k=0}^{n-1} \1\{X_k=y,\, X_{k+1}=y\pm 1\},$$ 
denotes the number of jumps from $y$ to $y\pm 1$ before time $n$, for any $y\in \Z$. 
Then 
\begin{equation}\label{ZNn}
Z_n(1)\equiv N_n(0,1) + N_n(2,1), \quad \text{and}\quad   
Z_n(2)\equiv N_n(1,2) +N_n(3,2).
\end{equation} 
Now observe that $W(N_n(2,1))-Y_n^-(2)$ is nondecreasing and that for all $n$, 
$$0\le W(N_n(2,1))-Y_n^-(2) \le Y_n^+(0)+W(z_0(1)).$$
Since by definition $Y_\infty^+(0)$ is finite on the event $E$, we deduce that  
$$W(N_n(2,1))\equiv Y_n^-(2) .$$
By symmetry, one has as well 
$$W(N_n(2,3))\equiv Y_n^+(2),$$
and since by \eqref{Ypm.conv}, one also has $Y_n^-(2) \equiv Y_n^+(2)$, we get in fact
\begin{eqnarray}
\label{W123}
W(N_n(2,1)) \equiv W(N_n(2,3)).
\end{eqnarray}
Moreover, Lemma \ref{Yfini} implies that under the hypotheses of the lemma and on the event $E$,  $Z_\infty(-1)$ is finite, and thus $Y_\infty^+(-1)$ also. 
Together with \eqref{eqW}, it follows that   
\begin{eqnarray}
 \label{W01}
W(N_n(1,0))\equiv W(Z_n(0))\equiv  Y_n^-(1)\equiv Y_n^+(1).
\end{eqnarray}
We claim now that   
\begin{eqnarray}
\label{WYn+}
\delta_n:= W(N_n(1,2)) - Y_n^+(1) \to +\infty.
\end{eqnarray}
Indeed, on one hand $\delta_n$ is nondecreasing, and on the other hand its limit $\delta_\infty$ satisfies $\delta_\infty \ge Y_\infty^-(3)$. 
Since $Z_\infty(4)=Z_\infty(2)=\infty$,  Lemma \ref{Yfini} shows that $Y_\infty^-(3)=\infty$, and we get \eqref{WYn+}. 
By using next that $|N_n(1,2)- N_n(2,1)|\le 1$, together with \eqref{W123}, \eqref{W01} and \eqref{WYn+}, we obtain
$$W(N_n(2,3))-W(N_n(1,0))\to +\infty,$$
which implies that $N_n(2,3)-N_n(1,0)\to +\infty$. Using now \eqref{ZNn}, it follows that $Z_n(2) - Z_n(1)\to +\infty$, almost surely. 
By symmetry we get as well $Z_n(2)-Z_n(3)\to + \infty$, and the lemma follows.  \end{proof}

\noindent Let us resume now the proof of Theorem \ref{positif5}. 
Lemma 3.7 in \cite{BSS} shows that there exists some local time configuration $\kC$, such that for the $\kC$-VRRW, the event 
$$E:= \{Z_\infty(0)=Z_\infty(4)=\infty\}\cap \{Y_\infty^+(0)<\infty\} \cap \{Y_\infty^-(4)<\infty\},$$
has some positive probability. 
Moreover, we know by \eqref{eqW} that on $E$, 
$$ W(Z_n(1)) - W(Z_n(3))\equiv Y_n^-(2)-Y_n^+(2),$$
and using \eqref{Ypm.conv}, we deduce that $W(Z_n(1)) - W(Z_n(3))$ converges as $n\to \infty$, towards some $\alpha\in\R$. 
Furthermore, Lemma 4.8 in \cite{BSS} shows that almost surely $\alpha \neq 0$, and by symmetry we can assume without loss of generality that $\alpha>0$. 
In particular, this gives $Z_n(1)\ge Z_n(3)$, for $n$ large enough.  
Set now 
$$h_n(2):= \sum_{k=1}^{Z_n(2)-z_0(2)} \frac{2p_k-1}{w(k+z_0(2))},$$
where $p_k$ is the probability to jump to site $1$ at $k$-th visit to site $2$. As noticed already in the proof of Lemma \ref{5.w.square}, one has 
$h_n(2) \equiv Y_n^+(1)-Y_n^-(3)$. 
But since after some time the process has at least probability $1/2$ to jump to $1$ 
when it is in $2$, we see that for $n$ large enough $h_n(2)$ is nondecreasing. In particular there exists some (random) constant $\gamma\in \R$, such that 
$h_n(2)\ge \gamma$, for all $n\ge 0$. This implies that for some other constant $\gamma'>0$, 
$$Y_n^-(3) \le Y_n^+(1) + \gamma',\quad \textrm{for all }n\ge 0.$$
By using also that 
$$W(Z_n(0)) \equiv Y_n^-(1) \equiv Y_n^+(1) \equiv W(Z_n(2)) -Y_n^-(3),$$
we deduce that for some (random) $\beta\in \R$, 
\begin{eqnarray}
\label{beta}
W(Z_n(2)) \le 2W(Z_n(0))+\beta,\quad \textrm{for all }n\ge 0.
\end{eqnarray}
Together with Lemma \ref{lem.Zn123}, this yields for some constant $c>0$,  
\begin{eqnarray*}
Y_\infty^+(0) \equiv  \sum_{n=0}^\infty \frac {\1\{X_n=0\}}{w(Z_n(1))} \ge c \sum_{n=0}^\infty \frac {\1\{X_n=0\} } {w(Z_n(2))} \ge  c \sum_{n=0}^\infty  \frac 1 {w(W^{-1}(2W(n) + \beta))},
\end{eqnarray*}
which concludes the proof of the theorem, since $Y_\infty^+(0)$ is finite on $E$. \hfill $\square$

\section{Proof of Theorem \ref{loc}}
We start the proof with some elementary lemma.  
\begin{lem} \label{lem.hypsquare}
Assume that $w$ is nondecreasing. Then 
$$\beta_c(w) <\infty \quad \Longrightarrow \quad \sum_{n=0}^\infty \frac 1{w(n)^2}<\infty.$$
\end{lem}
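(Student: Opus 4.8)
The plan is to obtain the conclusion directly from a lower bound on $J_\beta(w)$, via a single change of variables, after two trivial reductions.

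First reduction: if \eqref{twosite} fails, then $\sum_n 1/w(n)<\infty$, and since $w$ is nondecreasing we have $w(n)\ge w(0)>0$ for every $n$, whence $\sum_n 1/w(n)^2\le \tfrac1{w(0)}\sum_n 1/w(n)<\infty$ and the lemma is immediate. So I may assume \eqref{twosite}, i.e.\ that $W\colon \R_+\to\R_+$ is a bijection. Second reduction: since $w$ and $W^{-1}$ are both nondecreasing, for each fixed $x$ the quantity $w(W^{-1}(2W(x)+\beta))$ is nondecreasing in $\beta$, so $\beta\mapsto J_\beta(w)$ is nonincreasing; as $\beta_c(w)<\infty$, there is therefore some $\beta\ge 0$ with $J_\beta(w)<\infty$, which I fix from now on.

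Next I perform the substitution $u=W^{-1}(2W(x)+\beta)$, equivalently $W(x)=\tfrac12(W(u)-\beta)$: as $x$ runs over $[0,\infty)$, the variable $u$ runs over $[W^{-1}(\beta),\infty)$, and $dx/w(x)=du/(2w(u))$. Since $W$ is continuous, strictly increasing and piecewise linear and $W^{-1}$ is locally Lipschitz, this change of variables for a nonnegative integrand is routine. Using then only $w\ge w(0)>0$, it yields
$$J_\beta(w)=\frac12\int_{W^{-1}(\beta)}^{\infty}\frac{w\bigl(W^{-1}\bigl(\tfrac12(W(u)-\beta)\bigr)\bigr)}{w(u)^2}\,du\ \ge\ \frac{w(0)}2\int_{W^{-1}(\beta)}^{\infty}\frac{du}{w(u)^2}.$$
Finally, because $w(u)=w(\lfloor u\rfloor)$, one has $\int_0^\infty du/w(u)^2=\sum_{n\ge0}1/w(n)^2$ exactly; hence the displayed inequality gives $\sum_{n\ge0}1/w(n)^2\le \tfrac2{w(0)}J_\beta(w)+\int_0^{W^{-1}(\beta)}du/w(u)^2<\infty$, which is the claim.

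There is no genuine obstacle in this argument: the content is the single change of variables, and the only points needing a word of justification are the two reductions above and the legitimacy of that substitution for the piecewise-linear $W$; every other step is a one-line estimate.
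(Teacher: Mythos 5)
Your proof is correct, and it takes a genuinely different route from the paper's. The paper argues by bootstrap: assuming $w(n)\le\sqrt n$ for some large $n$, it exhibits roughly $c\sqrt n$ indices $k$ with $w(W^{-1}(2W(k)+\beta_c(w)+1))\le w(n+1)$, forcing the defining series of $J_{\beta_c(w)+1}(w)$ to have infinitely many blocks of mass bounded below, a contradiction; this yields $\liminf w(n)/\sqrt n\ge 1$, and a second pass with the exponent $2/3$ gives $\liminf w(n)/n^{2/3}\ge 1$, whence $\sum 1/w(n)^2\le\sum n^{-4/3}<\infty$ eventually. You instead substitute $u=W^{-1}(2W(x)+\beta)$ directly in $J_\beta(w)$ (for a fixed $\beta\ge 0$ with $J_\beta(w)<\infty$, which exists by monotonicity of $\beta\mapsto J_\beta$) and bound the resulting factor $w(W^{-1}(\tfrac12(W(u)-\beta)))$ below by $w(0)$, obtaining the clean quantitative inequality $\sum_n 1/w(n)^2\le \tfrac{2}{w(0)}J_\beta(w)+\int_0^{W^{-1}(\beta)}du/w(u)^2$. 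The substitution is legitimate: $x\mapsto W^{-1}(2W(x)+\beta)$ is an increasing, locally Lipschitz bijection of $[0,\infty)$ onto $[W^{-1}(\beta),\infty)$ with a.e.\ derivative $2w(u)/w(x)$ (the points where $x$ or $u$ is an integer form a null set), and the integrand is nonnegative. Your version is shorter and yields an explicit bound on $\sum 1/w(n)^2$ in terms of $J_\beta(w)$; the paper's version is longer but produces the additional pointwise information $\liminf w(n)/n^{2/3}\ge 1$. Your preliminary reduction disposing of the case where \eqref{twosite} fails is not needed given the paper's standing assumption, but it does no harm.
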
 
\begin{proof} Assume that $w(n) \le \sqrt n$, for some $n\ge 1$. Since $w$ is nondecreasing, this implies on one hand $W(n+1)\ge \sqrt n$, and also $w(k)\ge w(0)$, for all $k\ge 0$. The latter implies the existence of a constant $c>0$, such that 
$W(k)\le \sqrt n/3$, for all $k\le c \sqrt n$ (namely one can take $c=w(0)/3$). Assume that $n$ is large enough so that $\sqrt n/3 < (\sqrt n - \beta_c(w)-1)/2$. 
Then $2W(k)+\beta_c(w)+1 <  \sqrt n$, for all $k\le c\sqrt n$. Therefore $W^{-1}(2W(k)+\beta_c(w)+1)< n+1$, for all such $k$, and  
it follows that 
$$\sum_{ c\sqrt n/2 \le k\le c\sqrt n} \frac 1 {w(W^{-1}(2W(k)+\beta_c(w)+1))} \ge c/2.$$
In particular, by definition of $\beta_c(w)$, this can only happen for finitely many $n$, which proves that $\liminf w(n)/\sqrt n \ge 1$.

We use now this information to bootstrap the previous argument. Assume that $w(n)\le n^{2/3}$ for some $n\ge 1$, later taken large enough. Note first that this implies $W(n+1)>n^{1/3}$. Moreover, since $\liminf w(m)/\sqrt{m}\ge 1$, we can find $c>0$ some small enough constant, such that $W(cn^{2/3}) \le n^{1/3}/3$, assuming $n$ is large enough. Taking larger $n$ if necessary, one can assume that $2W(k) +\beta_c(w) + 1 \le n^{1/3}$, for all $k\le cn^{2/3}$. This implies first $W^{-1}(2W(k)+\beta_c(w) +1) \le n$, and then $w(W^{-1}(2W(k)+\beta_c(w)+1))\le n^{2/3}$, for all such $k$. Thus 
$$\sum_{ cn^{2/3}/2\le k\le cn^{2/3}} \frac 1 {w(W^{-1}(2W(k)+\beta_c(w)+1))} \ge c/2,$$
from which we deduce that $\liminf w(n)/n^{2/3}\ge 1$, and the lemma follows. 
\end{proof}

\noindent The next step is the following lemma.   
\begin{lem} \label{stop}
Assume that $w$ is nondecreasing and that $\widetilde \beta_c(w)<\infty$. 
For $N\ge 1$ integer, $\eta \in (0,1)$, and $\beta\in \R$ we define ${\bs \kC}_{N,\eta,\beta}$, as the set
$${\bs \kC}_{N,\eta,\beta} := \left\{ \{z_0(x)\}_{x\in \Z} \in \N^\Z :  \begin{array}{l} 
z_0(-1)\le z_0(-2)+z_0(0) \\
z_0(-1)\wedge z_0(0)\ge N \\
W(z_0(-2))\le W(z_0(0))-\eta\\
W(z_0(-3)) \le W(z_0(-1))/2 - \beta
\end{array} 
 \right\}. $$ 
Given $\kC$ some local time configuration, we denote by $\pp_\kC^*$ the law of the $\kC$-VRRW  
restricted to the set $\{-3,\dots,0\}$.   
For any $\eta\in (0,1)$,  and $\beta > (\widetilde \beta_c(w)+3\eta)/2$, one has   
$$\lim_{N\to \infty} \inf_{\mathcal C\in \bs \kC_{N,\eta,\beta}}\pp_\kC^*\left(Y_\infty^+(-3) <\infty \right) = 1.$$ 
\end{lem}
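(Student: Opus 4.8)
The plan is to analyze the restricted VRRW on $\{-3,\dots,0\}$ started from a configuration $\kC \in \bs\kC_{N,\eta,\beta}$, and to show that with probability tending to $1$ as $N\to\infty$, the process never visits $-3$ infinitely often, so that $Y_\infty^+(-3)<\infty$. The key heuristic is that the conditions defining $\bs\kC_{N,\eta,\beta}$ force, throughout the evolution, a persistent ordering of the local times: $W(Z_n(-2))$ stays below $W(Z_n(0)) - \eta$ (roughly), hence $Z_n(-1) \approx W^{-1}(2W(Z_n(0)) + O(\beta))$ by the relation \eqref{eqW} applied around site $-1$, and therefore site $-1$ grows much slower than site $0$. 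Then, just as in the computation closing the proof of Theorem \ref{positif5}, one estimates
$$Y_n^+(-3) \ \le \ c\sum_{k} \frac{\1\{X_k=-3\}}{w(Z_k(-2))} \ \le \ c\sum_{k\ge 0}\frac{1}{w\bigl(H^{-1}(W^{-1}(2W(Z_k(-1)) + \beta'))\bigr)},$$
where the inner $H^{-1}$ comes from the fact that on the restricted graph $Z_n(-2) \le Z_n(-3)+Z_n(-1)$ forces $Z_n(-2)$ to be at most $H^{-1}$ of the larger neighbor's count; summing over $k$ and using $\widetilde\beta_c(w)<\infty$ (with $\beta' < 2\beta - 3\eta$ close to $\widetilde\beta_c$) shows the series $\widetilde J_{\beta'}(w)$ converges, so $Y_\infty^+(-3)$ is bounded by a fixed finite quantity. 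The uniformity in $\kC$ and the convergence to $1$ then come from a Borel--Cantelli / martingale argument: by Lemma \ref{lem.hypsquare} and Lemma \ref{lem.hypsquare}'s consequence \eqref{hyp.square}, the martingales $M_n(x)$ converge, and starting from $Z_0(-1)\wedge Z_0(0)\ge N$ with $N$ large, the fluctuations of $W(Z_n(-2)) - W(Z_n(0))$ and of $W(Z_n(-3)) - W(Z_n(-1))/2$ around their initial values are, with high probability, smaller than $\eta$; this keeps the ordering intact for all time and makes the bounding series as small as we like.

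Concretely I would proceed as follows. \emph{Step 1: set up the relevant martingales and a stopping time.} Let $T$ be the first time $n$ at which either $W(Z_n(-2)) > W(Z_n(0)) - \eta/2$ or $W(Z_n(-3)) > W(Z_n(-1))/2 - \beta + \eta/2$, i.e. the first time the configuration leaves a slightly enlarged version of $\bs\kC_{\cdot,\eta/2,\beta-\eta/2}$. Using \eqref{eqW} to rewrite these quantities in terms of the $Y^\pm$ processes and hence in terms of the $L^2$-bounded martingales $M_n(x)$ (valid by Lemma \ref{lem.hypsquare}, since $\widetilde\beta_c(w)\ge \beta_c(w)$ is finite), a maximal inequality shows $\pp_\kC^*(T<\infty)\to 0$ uniformly over $\kC\in\bs\kC_{N,\eta,\beta}$ as $N\to\infty$, because the martingale increments past the $N$-th visit have total quadratic variation $\le \sum_{n\ge N}w(n)^{-2}\to 0$. \emph{Step 2: on $\{T=\infty\}$, control $Z_n(-1)$ and $Z_n(-2)$.} On this event, for all $n$ we have $W(Z_n(-2)) \le W(Z_n(0)) - \eta/2$, which via \eqref{eqW} around site $-1$ (reading $Y^+_n(-2) + Y^-_n(0) = W(Z_n(-1)) + O(1)$ and $Y^\pm$-symmetry from \eqref{Ypm.conv}) yields $W(Z_n(-1)) \le 2W(Z_n(0)) + O(\beta)$, and also (using $Z_n(-2)\le Z_n(-3)+Z_n(-1)+C$, the restricted-graph constraint) that $w(Z_n(-2)) \ge w(H^{-1}(W^{-1}(2W(Z_n(0)) + O(\beta))))$ up to constants. \emph{Step 3: bound $Y_\infty^+(-3)$.} As in the end of the proof of Theorem \ref{positif5}, compare $Y^+_n(-3)$ with $\sum_n \1\{X_n=-3\}/w(Z_n(-2))$, and since each visit to $-3$ increments the count at $-3$ by one, replace $Z_n(0)$ (which, by the ordering, dominates a constant multiple of the number of visits to $-3$) by an integer index to obtain $Y_\infty^+(-3) \le c\cdot \widetilde J_{\beta'}(w) + C < \infty$ on $\{T=\infty\}$, with the constants $c,C$ depending only on $w$ and $\eta,\beta$ — crucially \emph{not} on $N$. \emph{Step 4: conclude.} Combine: $\pp_\kC^*(Y_\infty^+(-3)=\infty) \le \pp_\kC^*(T<\infty)$, and the right-hand side is $\le o_N(1)$ uniformly, giving the claim. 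One subtlety to address in Step 3 is that a finite deterministic bound on $\sum_n \1\{X_n=-3\}/w(Z_n(-2))$ does not by itself give $Y^+_\infty(-3)<\infty$ almost surely; this is where the conditional Borel--Cantelli lemma (Theorem 4.3.2 in \cite{Dur}) enters, exactly as in Lemma \ref{5.w.square}: the expected number of returns from $-3$ being summable forces finitely many actual visits a.s.

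The main obstacle, as I see it, is \emph{making Step 1 genuinely uniform in $\kC$}: one must show that the probability of the "bad" ordering-breaking event, starting from an arbitrary configuration in $\bs\kC_{N,\eta,\beta}$, is controlled solely through the tail $\sum_{n\ge N}w(n)^{-2}$ and the fixed gaps $\eta$ and $2\beta - \widetilde\beta_c(w) - 3\eta > 0$. The delicate point is that the quantities being tracked, such as $W(Z_n(-3)) - W(Z_n(-1))/2$, are not literally martingales — they are martingales plus monotone drift terms (the "$h_n$" corrections appearing in the proof of Theorem \ref{positif5}) — and one has to check that the drift pushes in the favorable direction once the walk has visited each site often enough, i.e. that the probabilities $p_k$ of the relevant transitions are past $1/2$ (or past the relevant threshold) for $k\ge N$. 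Getting this drift sign right, and simultaneously ensuring the residual martingale fluctuation is $<\eta/2$ with probability $\ge 1-o_N(1)$, is the crux; everything after that is the same deterministic integral estimate that already appears in Theorem \ref{positif5}, now with $H^{-1}$ inserted to account for the three-site (rather than unbounded) geometry to the left of $-1$.
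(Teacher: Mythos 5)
There is a genuine gap, and it sits exactly where you flag ``the crux.'' Your Step 1 asserts that, uniformly over $\kC\in\bs\kC_{N,\eta,\beta}$, the ordering $W(Z_n(-2))\le W(Z_n(0))-\eta/2$ and the bound $W(Z_n(-3))\le W(Z_n(-1))/2-\beta+\eta/2$ persist for all time with probability $1-o_N(1)$, via a maximal inequality with quadratic variation $\sum_{n\ge N}w(n)^{-2}$. This fails for two reasons. First, the definition of $\bs\kC_{N,\eta,\beta}$ only forces $z_0(-1)\wedge z_0(0)\ge N$; the initial local times at $-2$ and $-3$ may be as small as $0$. The increments of $M_n(-1)$ are of size $1/w(Z_k(-2))$ and those of $M_n(-2)$ of size $1/w(Z_k(-3))$, so from time $0$ their quadratic variation is \emph{not} controlled by $\sum_{n\ge N}w(n)^{-2}$, and no uniform-in-$\kC$ smallness is available at the outset. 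Second, and more fundamentally, the quantity $W(Z_n(-2))-W(Z_n(0))$ is not a small-fluctuation martingale: by \eqref{eqW} and \eqref{Ypm.conv} it satisfies $W(Z_n(-2))-W(Z_n(0))\equiv Y_n^+(-3)$, i.e.\ it is a \emph{nondecreasing drift equal to the very quantity you are trying to bound}, plus a convergent correction. Concluding that the gap never closes therefore presupposes $Y_\infty^+(-3)$ small, which is circular; one cannot rule out that the gap closes with non-negligible probability, since early excursions to $-3$ (when $Z(-2)$ is still small) each contribute an amount of order $1/w(z_0(-2))$, which is of order one, to $Y^+(-3)$.

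The paper resolves this with a cascade of stopping times that your proposal is missing. On $\{T_1=\infty\}$ (the gap never closes to $\eta/2$) the conclusion is immediate from $W(Z_n(-2))-W(Z_n(0))\equiv Y_n^+(-3)$. If $T_1<\infty$, one first waits until $T_0$, when $Z(-2)$ has become large, so that only then are the fluctuations of $M(-1)$ small; the increase of $W(Z(-2))-W(Z(0))$ by $\eta/3$ between $T_0$ and $T_1$ then forces $Y^+_{T_1}(-3)-Y^+_{T_0}(-3)\ge \eta/6$, hence $Z_{T_1}(-3)\ge N''$ with $N''\to\infty$; only at that point do the fluctuations of $M(-2)$ become controllable, which is what lets one show that $U(n)=W(Z_n(-3))-W(Z_n(-1))/2$ stays below $-\beta+\eta$ and that the further increment of $Y^+(-3)$ up to $T_2$ is bounded by a tail of $\widetilde J$, whence $T_2=\infty$ with high probability. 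Your Steps 2--4 (the deterministic comparison $Z_n(-1)\le Z_n(-2)+Z_n(0)\le H(Z_n(-2))$ and the domination by $\widetilde J_{\beta'}(w)$, reindexed by the number of visits to $-3$) are essentially the right endgame, though note the relevant inequality is $Z_n(-1)\le H(Z_n(-2))$, valid precisely in the regime $n\ge T_1$ where $W(Z_n(-2))\ge W(Z_n(0))-1$, and the integrand must be indexed by $Z_n(-3)$ (via the bound on $U$), not by $Z_n(0)$. Without the $T_0\to T_1\to T_2$ bootstrap, the argument does not close.
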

\begin{proof} 
Let $N\ge 1$, $\eta>0$, and $\beta>(\widetilde \beta_c(w)+3\epsilon)/2$ be given. Consider $\kC\in \kC_{N,\varepsilon,\beta}$, and define the following stopping times: 
$$T_0:= \inf\{n\ge 0: W(Z_n(-2)) \ge W(N)-\eta\}, $$
$$T_1:=\inf\{n\ge T_0 : W(Z_n(-2))\ge W( Z_n(0))-\eta/2\},$$
\begin{eqnarray*}
T_2:=\inf\left\{n\ge T_1 : 
\begin{array}{l} 
W( Z_n(-2))\ge W( Z_n(0))-\eta/4 \\
\textrm{or}\\
W( Z_n(-2))\le W(Z_n(0))-3\eta/4 
\end{array}
\right\},
\end{eqnarray*}
and 
$$T_3 : = \inf\{n\ge 0  : W( Z_n(-3))\ge W(N)/2-\beta\}.$$
The main steps of the proof are the following. First we will see that on the event when $T_1$ is infinite, $Y_\infty^+(-3)$ is finite, 
and thus the main part of the proof is to deal with the event when $T_1$ is finite.  
Now after time $T_1$ we know that the local time in $-2$ is large, 
specifically 
$Z_{T_1}(-2) \ge N':=W^{-1}(W(N)-1)$, since $\eta\le 1$ by hypothesis. 
This ensures that the fluctuations of the martingale $(M_n(-1))_{n\ge 0}$ after time $T_1$ are small, by Doob's $L^2$ inequality combined 
with the fact that the square of $w$ is reciprocally summable by hypothesis on $\tilde \beta_c(w)$ and Lemma \ref{lem.hypsquare} (recall that $\beta_c(w)\le \tilde \beta_c(w)$). More precisely, we fix now some $\varepsilon>0$, 
and we get that for $N$ large enough, 
\begin{eqnarray}
\label{Mnx+2}
\pp_\kC^*\left(\sup_{n\ge T_1}\, |M_n(-1) - M_{T_1}(-1)|\ge \frac{\eta}{10} \right) \le 
\frac{200}{\eta^2} \sum_{i= N'}^\infty \frac 1{w(i)^2} \le \epsilon.  
\end{eqnarray}
The next step is to see that necessarily at time $T_1$ the local time in $-3$ is also large, and thus that 
the fluctuations of the martingale $(M_n(-2))_{n\ge 0}$ after $T_1$ are also small for $N$ large enough; see \eqref{Mnx+1} below. 
This is where the role of $T_0$ comes into play, since we show that the increment of the local time in $-3$ between $T_0$ and $T_1$ 
goes to infinity as $N\to \infty$. We note that to prove this, we use \eqref{Mnx+2}.  
The last step of the proof is to see that with probability close to one $T_2$ is infinite, and furthermore that the increment of $Y_n^+(-3)$ between times $T_1$ and $T_2$ is dominated by a series which appears in the definition of $\tilde \beta_c(w)$, which is why we need this quantity to be finite. This is also where the role of $T_3$ appears. We show that the process 
$$U(n) : = W( Z_n(-3)) - \frac {W( Z_n(-1))}2,$$ 
remains bounded between times $T_1\wedge T_3$ and $T_2$, with probability close to one (and is also bounded up to time $T_3$ by definition).

Let us now proceed with the details of the argument. First observe that $W(Z_n(-2))-W(Z_n(0)) \equiv  Y_n^+(-3)$, and thus  
\begin{eqnarray}
\label{Tfini}
\{T_1=\infty\} \subseteq \{ Y_\infty^+(-3)<\infty\}.
\end{eqnarray}
Therefore, one can assume now that $T_1$ is finite.

The next step is to show that the local time in $-3$ at time $T_1$ is large, and for this we show that its increment between $T_0$ and $T_1$ is large. 
Indeed, note first that since the process we consider is reflected in $0$, one has for any $n\ge T_0$, 
$$Y_n^+(-1)-Y_{T_0}^+(-1) = W( Z_n(0))-W( Z_{T_0}(0)).$$
In addition, \eqref{eqW} gives 
$$\left(Y_n^+(-3)-  Y_{T_0}^+(-3)\right) +\left( Y_n^-(-1) - Y_{T_0}^-(-1)\right) = W(Z_n(-2)) -W(Z_{T_0}(-2)).$$  
Assume that $N'=W^{-1}(W(n)-1)$ is large enough so that $w(N')\ge 6/\eta$. By definition of $T_0$ and $T_1$, this implies 
$$W( Z_{T_1}(-2)) - W( Z_{T_0}(-2)) \ge W( Z_{T_1}(0))- W( Z_{T_0}(0)) + \frac{\eta}{3}.$$
Then it follows from the last displays and \eqref{Mnx+2} that for $N$ large enough, 
\begin{align}
\label{Y+x}
\pp_\kC^*\left( Y_{T_1}^+(-3) -  Y_{T_0}^+(-3) \le \frac{\eta}{6}\right)\le \pp_\kC^*\left(| M_{T_1}(-1) -  M_{T_0}(-1)|\ge \frac{\eta}{6} \right)\le \epsilon. 
\end{align}
Define next 
$$N'':=\inf\{n\ge N' : W(n)-W(N')\ge \frac{\eta}{6}\}.$$
Since $w$ is nondecreasing, and since we recall that by definition of $T_0$, one has $Z_{T_0}(-2)\ge N'$, it follows from \eqref{eqW} that  
\begin{equation}\label{ZNx}
\left\{Y_{T_1}^+(-3) -  Y_{T_0}^+(-3) \ge \frac{\eta}{6}\right\} \, \subseteq \, \left\{Z_{T_1}(-3)- Z_{T_0}(-3) \ge N''-N'\right\}.
\end{equation}
However, $N''-N'\to \infty$, and thus $Z_{T_1}(-3) \to \infty$ as well, when $N\to \infty$. It follows using again Doob's $L^2$-inequality, that for $N$ large enough, 
\begin{eqnarray}
\label{Mnx+1}
\pp_\kC^*\left(\sup_{n\ge T_1}\, | M_n(-2) -  M_{T_1}(-2)|\ge \frac{\eta}{6} \right) \le \epsilon.
\end{eqnarray}
The last step of the proof is to show that with probability close to one, $T_2$ is infinite and $Y_{T_2}^+(-3) - Y_{T_1}^+(-3)$ is  finite. 
Let 
$$h(n):=\sum_{k=1}^{ Z_n(-1)} \frac {2p_k-1}{w(k)},$$
where $p_k$ is the probability to jump to $0$ at $k$-th visit to $-1$. 
Recall that 
$$ Y_n^-(0)- Y_n^+(-2) \equiv h(n),$$
and on the other hand \eqref{eqW} and Lemma \ref{Yfini} yield 
$$W( Z_n(-1))\equiv  Y_n^+(-2) +  Y_n^-(0),\quad \text{and}\quad  Y_n^+(-2)\equiv  Y_n^-(-2)\equiv W( Z_n(-3)).$$
As a consequence,  
$$U(n)  = W( Z_n(-3)) - \frac {W( Z_n(-1))}2 \equiv -\frac {h(n)}2.$$
Since $Z_n(-2)\le Z_n(0)$, for all $n\le T_2$, 
$h$ is nondecreasing up to time $T_2$. 
Note that $U(T_1\wedge T_3) \le -\beta+\eta/2$, if $N$ is taken large enough. Also by definition, $\sup_{n\le T_3} U(n)\le -\beta$. 
Therefore by using \eqref{Mnx+1}, and again Doob's $L^2$-inequality, we get at least for $N$ large enough, 
\begin{eqnarray} 
\label{Yn+x+1}
\pp_\kC^* \left(\sup_{T_1\wedge T_3 \le n \le T_2} U(n) \ge -\beta+\eta \right)\leq \epsilon.
\end{eqnarray} 
Remember then that $H$ is defined by $H(x)=x+W^{-1}(W(x)+1)$, and thus by using the hypothesis on $\kC$, 
we get that for all $T_1\le n\le T_2$, 
$$Z_n(-1)\le Z_n(-2) + Z_n(0) \le H( Z_n(-2)).$$
It follows that on the event $\{\sup_{T_1\wedge T_3 \le n \le T_2} U(n)\le -\beta+\eta\}$, one has 
\begin{eqnarray}
\nonumber  Y_{T_2}^+(-3)- Y_{T_1}^+(-3) &=& \sum_{n=T_1}^{T_2} \frac {\1\{X_n=-3\}}{w(Z_n(-2))}
\le   \sum_{n=T_1}^{T_2} \frac {\1\{X_n=-3\}}{w(H^{-1}(Z_n(-1)))} \\
\label{ZTx} &\le & \sum_{n= Z_{T_1}(-3)}^{\infty} \frac 1 {w(H^{-1}(W^{-1}(2W(n) + 2\beta - 2\eta )))}. 
\end{eqnarray}
Using now that $\beta \ge (\widetilde \beta_c(w)+3\eta)/2$, we can find $K\ge 1$ such that 
$$\sum_{n=K}^{\infty} \frac 1 {w(H^{-1}(W^{-1}(2W(n) + \widetilde \beta_c(w) +\eta )))}\le \frac{\eta}{10}.$$
Then by using \eqref{Y+x}, \eqref{ZNx}, \eqref{Yn+x+1} and \eqref{ZTx}, we get that if $N$ is large enough, 
\begin{eqnarray}
\label{YT'T}
\pp_\kC^* \left( Y_{T_2}^+(-3)- Y_{T_1}^+(-3) > \frac{\eta}{10} \right) \le 2\epsilon.
\end{eqnarray}
But by definition of $T_1$ and $T_2$,  on the event $\{T_2<\infty\}$, we have for $N$ large enough, 
\begin{eqnarray*}
\eta/5 & < & \{ W( Z_{T_2}(-2)-W(Z_{T_1}(-2))\}-\{W( Z_{T_2}(0)-W(Z_{T_1}(0))\} \\
&= & \{Y_{T_2}^+(-3)- Y_{T_1}^+(-3)\} - \{ M_{T_2}(-1)- M_{T_1}(-1)\}.
\end{eqnarray*}
Therefore \eqref{Mnx+2} and \eqref{YT'T} imply  
\begin{eqnarray*}
\pp_\kC^*(T_2<\infty)\le 3\varepsilon, \quad \text{and}\quad  \pp_\kC^*\left( Y_\infty^+(-3)- Y_{T_1}^+(-3) \ge 1  \right)\le 5\epsilon. 
\end{eqnarray*}
Since $\epsilon>0$ can be chosen arbitrarily small, and since we recall that on the event $\{T_1=\infty\}$, one has $Y_\infty^+(-3)<\infty$, this concludes the proof of the lemma.  
\end{proof}
\noindent We can now finish the proof of Theorem \ref{loc}. Fix some $\eta\in (0,1)$ and $\beta>(\tilde \beta_c(w)+3\eta)/2$, and consider some initial local time configuration $\kC$,  such that $z_0(-1)\ge N$, $z_0(0)\ge N$, and $z_0(x)=0$, for $x\notin \{-1,0\}$, with $N\ge 1$.  
Note that by definition $\kC\in \bs \kC_{N,\eta,\beta}$, and thus by Lemma \ref{stop} one has  
 $\pp_\kC^*(Y_\infty^+(-3)<\infty)\ge 3/4$, for $N$ large enough. 
Using the continuous time-line construction of the VRRW (also called Rubin's construction, see  \cite{T2,BSS}), we can couple the process reflected in $-3$ and $0$, say $X$, with the process reflected in $-3$ and $2$, say $\tilde X$, and Lemma 3.6 in \cite{BSS} (see also \cite{T2} for a similar result) 
tells us that $\tilde Y_\infty^+(-3)\le Y_\infty^+(-3)$. Applying this argument twice, we see that for $N$ large enough, with probability at least $1/2$, one has both $\tilde Y_\infty^+(-3)$ and $\tilde Y_\infty^-(2)$ finite. Then using Lemma 3.7 in \cite{BSS}, we deduce   
that for $N$ large enough (say larger than some $N_0$) the unreflected $\kC$-VRRW on $\Z$ never visits sites $-4$ and $3$, with some positive probability. 

\vspace{0.2cm}
\noindent  
Then we can see that the same holds for the $\kC_0$-VRRW, since for any $N\ge 1$, 
with positive probability at time $2N$, we have $Z_N(-1)=N-1$, $Z_N(0) = N$, and $X_{2N}=0$, and one can then apply the previous result at time $2N$. 
This proves in particular that $|R'|\in \{5,6\}$, with positive probability. 

\vspace{0.2cm}
\noindent Now it just remains to show that almost surely the walk visits only a finite number of sites. However, each time the VRRW on $\Z$ visits a new site $x<0$, two cases may appear. If at this time the local time in $x+1$ is not larger than $N_0$, then the process 
has some positive probability (depending only on $N_0$) to jump immediately to $x-1$, and then to localize on the set $\{x-7,\dots,x-1\}$ and never come back to $x$, by the above argument. 
If instead at this hitting time of $x$, the local time in $x+1$ is larger than $N_0$, then necessarily the local time in $x+2$ has to be also not smaller than $N_0$, and we deduce by using again the above argument, that the process has some (constant) positive probability to never visit $x-2$. 
Then the conditional Borel-Cantelli lemma (see Theorem 4.3.2 in \cite{Dur}) shows that almost surely $\inf_n X_n>-\infty$. By symmetry we also get that almost surely $\sup_n X_n<\infty$, and this concludes the proof of Theorem \ref{loc}. \hfill $\square$

\section{On the values of the parameters $\beta_c(w)$ and $\widetilde \beta_c(w)$.}
Let us first observe that for any nondecreasing $w$, and any $\lambda>0$, one has $\beta_c(\lambda w) =  \beta_c(w)/\lambda$, and $\tilde \beta_c(\lambda w) =  \tilde \beta_c(w)/\lambda$ (which follows from the facts that $J_\beta(\lambda w) =  J_{\lambda \beta}(w)/\lambda$, and $\tilde J_\beta(\lambda w) = \tilde J_{\lambda \beta}(w)/\lambda$).

Now our aim here is to convince the reader that 
in most cases (and we believe this is true in fact for any nondecreasing weight function), one has:
\begin{eqnarray}
\label{conjpoids}
\beta_c(w)=\widetilde \beta_c(w)\in \{-\infty,\infty\}.
\end{eqnarray}
On one hand we prove in Lemma \ref{surlinear} that this is true for any weight function growing at least linearly and not faster than $n\sqrt{\log n}$.  
On the other hand, we show in Lemma \ref{sublinear} that it holds as well for a large class of sublinear weights.

Now recall that one can restrict our attention to weights satisfying \eqref{twosite} and such that $\alpha_c(w)=\infty$, since 
otherwise we already know the behavior of the process by the results of \cite{BSS}. But it is also proved there 
that if $\liminf w(n)/(n\log \log n)>0$, then $\alpha_c(w)$ is finite; thus the upper bound on $w$, which is imposed in the hypotheses of 
Lemma \ref{surlinear} below is not a strong restriction. 

\begin{lem} \label{surlinear} 
Let $w$ be some nondecreasing weight function, such that 
$$\liminf_{n\to \infty} \frac{w(n)}{n}>0.$$  
\begin{itemize}
\item[$\bullet$] If $w(n)=o(n\log n)$, then $\beta_c(w)=-\infty$. 
\item[$\bullet$] If $w(n) = o(n\sqrt{\log n})$, then $\beta_c(w)= \tilde \beta_c(w)= -\infty$. 
\end{itemize}
\end{lem}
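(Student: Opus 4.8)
The plan is to estimate $J_\beta(w)$ and $\widetilde J_\beta(w)$ directly under the growth hypotheses, exploiting the fact that a weight growing like $n$ times a slowly varying factor has a very explicit $W$ and $W^{-1}$. First I would record the basic asymptotics: if $\liminf w(n)/n = c_0 > 0$ and $w(n) = o(n\phi(n))$ for a slowly varying $\phi$, then $1/w(u) \le C/(u\phi(u))$ for large $u$, so $W(t) = \int_0^t du/w(u)$ grows like a primitive of $1/(u\phi(u))$; in the two cases of interest, $\phi(u) = \log u$ gives $W(t) \asymp \log\log t$, while $\phi(u)=\sqrt{\log u}$ gives $W(t)\to\infty$ even more slowly (like $2\sqrt{\log t}$ up to constants, but in any case $W(t)\to\infty$). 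Dually, since $W$ is increasing to $+\infty$ but extremely slowly, $W^{-1}$ grows faster than any polynomial: from $W(t)\le A\log\log t$ (say) one gets $W^{-1}(s)\ge \exp(\exp(s/A))$ for large $s$, and similarly $W^{-1}(s)\ge \exp(c s^2)$ in the $\sqrt{\log}$ case. The key structural point is that because $W$ grows so slowly, the map $x\mapsto W^{-1}(2W(x)+\beta)$ is essentially $x$ composed with a doubling in the (doubly- or singly-)logarithmic scale, which sends $x$ to something like $x^2$ or $x^{\text{large power}}$, making the integrand summable for \emph{every} real $\beta$.

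Concretely, for the first bullet I would argue as follows. Fix any $\beta\in\R$. For large $x$, $W(x) \le A\log\log x$ (using $w(n)=o(n\log n)$, so the bound holds with $A$ arbitrarily large, which is all we need — or more carefully $W(x)\sim \log\log x$ after normalizing, but an inequality suffices). Then $2W(x)+\beta \le 2A\log\log x + \beta \le W(x^3)$ for $x$ large, because $W(x^3)\ge c_1\log\log(x^3) = c_1(\log\log x + \log 3) \ge c_1\log\log x$ and by choosing the comparison exponent large enough — here I need to be a little careful about the direction of the inequalities on $W$ versus $W^{-1}$, so the honest statement is: there is $p>1$ (depending on $\beta$ and the constants) with $W^{-1}(2W(x)+\beta)\ge x^p$ for all large $x$. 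Since $w$ is nondecreasing, $w(W^{-1}(2W(x)+\beta)) \ge w(x^p) \ge c_0 x^p /2$ for large $x$, and then
$$J_\beta(w) = \int_0^\infty \frac{dx}{w(W^{-1}(2W(x)+\beta))} \le \text{(const)} + \frac{2}{c_0}\int_{x_0}^\infty x^{-p}\,dx < \infty.$$
As this holds for every $\beta\in\R$, we get $\beta_c(w)=-\infty$.

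For the second bullet I would show the same for $\widetilde J_\beta(w)$, which is the genuinely new work. Here the extra ingredient is that we must also control $H^{-1}$, where $H(x) = x + W^{-1}(W(x)+1)$. Under $w(n)=o(n\sqrt{\log n})$ we have $W(x)\to\infty$ (slowly), hence $W^{-1}(W(x)+1)$ is at most $x$ times a factor that I claim is subexponential and in fact of smaller order than $x^\epsilon$: writing $W(x)\le B\sqrt{\log x}$, the increment $W^{-1}(W(x)+1)/x$ is governed by $\exp$ of the derivative of $W^{-1}$, and since $W$ grows at rate $\sim 1/(x\sqrt{\log x})$, one finds $W^{-1}(W(x)+1) \le x^{1+o(1)}$, more precisely $\le x\exp(C\sqrt{\log x})$ for some constant $C$; consequently $H(x)\le x^{1+o(1)}$ and so $H^{-1}(y)\ge y^{1-o(1)}$, i.e. for any $\delta>0$, $H^{-1}(y)\ge y^{1-\delta}$ for $y$ large. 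Now combine with the first-bullet-style bound: under $w(n)=o(n\sqrt{\log n})$ one still gets $W^{-1}(2W(x)+\beta)\ge x^q$ for some $q>1$ and all large $x$ (in fact $W(x)\le B\sqrt{\log x}$ forces $W^{-1}(2s)\ge W^{-1}(s)^{\,4}$ roughly, since doubling $\sqrt{\log}$ quadruples $\log$; this gives $q=4-o(1)$, comfortably bigger than $1$). Therefore $H^{-1}(W^{-1}(2W(x)+\beta)) \ge (x^q)^{1-\delta} = x^{q(1-\delta)}$, and choosing $\delta$ small enough that $p:=q(1-\delta)>1$, the monotonicity of $w$ and $\liminf w(n)/n>0$ give $w(H^{-1}(W^{-1}(2W(x)+\beta)))\ge (c_0/2)x^p$ for large $x$, whence $\widetilde J_\beta(w)<\infty$ for every $\beta$, i.e. $\widetilde\beta_c(w)=-\infty$; and since $\beta_c(w)\le\widetilde\beta_c(w)$ always, $\beta_c(w)=-\infty$ too. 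The main obstacle I anticipate is making the $H^{-1}(y)\ge y^{1-\delta}$ estimate rigorous — it requires quantifying how slowly $W$ grows into a quantitative statement that $W^{-1}(W(x)+1)$ exceeds $x$ by no more than a subpolynomial factor, which is exactly where the hypothesis $w(n)=o(n\sqrt{\log n})$ (rather than merely $o(n\log n)$) is used, and one must check the bookkeeping of constants carefully since we need the resulting exponent on $x$ to stay strictly above $1$ for all $\beta$.
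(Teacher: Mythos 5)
Your overall strategy (dominate the integrands, uniformly in $\beta$, by an integrable function of $x$) is the paper's as well, but the power-type bounds you build it on do not follow from the hypotheses, and in the second bullet this is fatal. The hypotheses give two one-sided estimates only: $w(n)\ge c_0 n$ yields the \emph{upper} bound $W(t)\le \tfrac{1}{c_0}\log t+C$, while $w(n)\le \epsilon^2 n\log n$ yields the \emph{lower} bound $W(t)\ge \tfrac{1}{2\epsilon^2}\log\log t$; your ``$W(t)\le A\log\log t$, hence $W^{-1}(s)\ge \exp(\exp(s/A))$'' uses the inequality in the wrong direction. More importantly, your key claim $W^{-1}(2W(x)+\beta)\ge x^{p}$ for some $p>1$ is equivalent to $W(x)+\beta\ge \int_x^{x^p}du/w(u)$; the only available upper bound for the right-hand side is $\tfrac{p-1}{c_0}\log x$, while the left-hand side can be as small as $\tfrac{1}{2\epsilon^2}\log\log x$. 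The claim is in fact false in general: a nondecreasing weight alternating between long stretches where $w(u)\approx u$ and stretches where $w(u)\approx\delta_k u\log u$ with $\delta_k\to 0$, glued by plateaus, satisfies all the hypotheses yet has $W(x)=o(\log x)$ and $w(u)=u$ just beyond $x$ along a subsequence of $x$'s, forcing $W^{-1}(2W(x)+\beta)=x\,e^{W(x)+\beta}=x^{1+o(1)}$ there. What is true (and what the paper proves, by writing $W(x)+\beta=\int_x^{W^{-1}(2W(x)+\beta)}du/w(u)\le \tfrac{2}{\epsilon}\log\bigl(W^{-1}(2W(x)+\beta)/x\bigr)$ and inserting the lower bound on $W(x)$) is only the subpolynomial gain $W^{-1}(2W(x)+\beta)\ge x(\log x)^{1/(4\epsilon)}$, resp.\ $\ge x\,e^{\sqrt{\log x}}$ under $w(n)=o(n\sqrt{\log n})$. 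For the first bullet this weaker bound still suffices, since $\int dx/(x(\log x)^A)<\infty$ for $A>1$ and $\epsilon$ may be taken arbitrarily small, so your first bullet is repairable by replacing $x^p$ with $x(\log x)^A$.

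The second bullet does not survive this correction in your formulation, because there you concede a genuine power through $H^{-1}(y)\ge y^{1-\delta}$. That estimate is correct, but once the gain of $W^{-1}(2W(x)+\beta)$ over $x$ is only the factor $e^{\sqrt{\log x}}$, composing with $y\mapsto y^{1-\delta}$ gives $H^{-1}(W^{-1}(2W(x)+\beta))\ge x^{1-\delta}e^{(1-\delta)\sqrt{\log x}}$, which is eventually \emph{smaller} than $x$, and the resulting majorant of the integrand is not integrable. The loss through $H^{-1}$ must be kept multiplicative and subpolynomial: from $w(n)\le\epsilon^2 n\sqrt{\log n}$ one gets $W^{-1}(W(x)+1)\le x\,e^{\epsilon^2\sqrt{\log x}}$, hence $H(x)\le 2x\,e^{\epsilon^2\sqrt{\log x}}$ and $H^{-1}(y)\ge \tfrac{y}{2}e^{-\epsilon^2\sqrt{\log y}}$; combined with $W^{-1}(2W(x)+\beta)\ge x\,e^{\sqrt{\log x}}$ and $\epsilon$ small, this yields $H^{-1}(W^{-1}(2W(x)+\beta))\ge x(\log x)^2$, which is what makes $\widetilde J_\beta(w)<\infty$. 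This careful comparison of the competing factors $e^{\sqrt{\log x}}$ versus $e^{\epsilon^2\sqrt{\log x}}$ is the real content of the proof and is precisely where the hypothesis $o(n\sqrt{\log n})$, rather than $o(n\log n)$, is used.
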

\begin{proof}
Assume first that $w(n) = o(n\log n)$. Let $\epsilon>0$, be such that $w(n)\le \epsilon^2 n\log n$, and $w(n)\ge \epsilon n$, for all $n$ large enough. Then at least for $t$ large enough,  
\begin{equation}\label{W.eq1}
W(t)\ge \frac 1{2\epsilon^2} \log \log t. 
\end{equation}
Now by definition, for any $\beta \in \R$, and $t$ large enough,  
$$W(t)+\beta = \int_t^{W^{-1}(2W(t)+\beta)} \frac{du}{w(u)}.$$
Thus using that $w(n)\ge \epsilon n$, we get that for $t$ large enough,
\begin{equation*}\label{W.eq2}
W(t) +\beta \le \frac 2{\epsilon} \log \left(\frac {W^{-1}(2W(t)+\beta)}{t}\right).
\end{equation*}
Combining this with \eqref{W.eq1}, we get,
$$W^{-1}(2W(t)+\beta) \ge t(\log t)^{\frac 1{4\epsilon}}, $$
for all $t$ large enough. Then by choosing $\epsilon<1/8$, and using again that $\liminf w(n)/n>0$, the first assertion of the lemma follows.

Assume now that $w(n)=o(n\sqrt{\log n})$, so that for $n$ large enough $w(n)\le \epsilon^2 n\sqrt{\log n}$. 
Then for $x$ large enough, 
\begin{align*}
1&=\int_x^{W^{-1}(W(x)+1)} \frac{du}{w(u)}\ge \frac{1}{\epsilon^2} \int_x^{W^{-1}(W(x)+1)} \frac{du}{u\sqrt{\log u}}\\
& \ge \frac 2{\epsilon^2} \left(\sqrt{\log W^{-1}(W(x)+1)} - \sqrt{ \log x}\right).
\end{align*}
Thus for $x$ large enough (and $\epsilon$ small enough), 
$$H(x) = x+ W^{-1}(W(x)+1) \le 2 x \cdot e^{\epsilon^2\sqrt{\log x}}.$$ 
On the other hand, a similar argument as above shows that for any $\beta\in \R$, for $x$ large enough, 
$$W^{-1}(2W(x) + \beta)\ge x \exp(\sqrt{\log x}).$$
In particular, by taking $\epsilon$ small enough, we get that for $x$ large enough, 
$$H^{-1}(W^{-1}(2W(x)+\beta)) \ge x(\log x)^2, $$
and the second assertion of the lemma follows, using again that $\liminf w(n)/n$ is positive.  
\end{proof}

\noindent Our second result is concerned with sublinear weights.  
\begin{lem} 
\label{sublinear}
Let $w$ be some nondecreasing weight function satisfying \eqref{twosite}. 
If the two following conditions hold: 
\begin{eqnarray}
\label{Halpha}
\limsup \ W^{-1}(W(n)+\alpha)/n <\infty, \quad  \textrm{for any } \alpha >0, 
\end{eqnarray}
and
\begin{eqnarray}
\label{wc} 
\limsup \ w(cn)/w(n) <\infty, \quad \textrm{for any } c>1,
\end{eqnarray}
then $\beta_c(w) = \widetilde \beta_c(w) \in \{-\infty,\infty\}$. Moreover, if $w(n)=\kO(n)$, then \eqref{Halpha} holds. 
\end{lem}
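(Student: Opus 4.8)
The plan is to show two things: first, that conditions \eqref{Halpha} and \eqref{wc} together force $\beta_c(w)=\widetilde\beta_c(w)$, and second, that \eqref{Halpha} holds whenever $w(n)=\kO(n)$; the dichotomy $\beta_c(w)\in\{-\infty,\infty\}$ will then follow by a simple scaling argument. Recall we already know $\widetilde\beta_c(w)\ge\beta_c(w)$ for every $w$, so the only content in the first claim is the reverse inequality: I want to show that if $J_\beta(w)<\infty$ for some $\beta$, then $\widetilde J_{\beta'}(w)<\infty$ for some (larger) $\beta'$. The key comparison is between $H^{-1}(W^{-1}(2W(x)+\beta))$ and $W^{-1}(2W(x)+\beta)$ itself: since $H(x)=x+W^{-1}(W(x)+1)$, condition \eqref{Halpha} (applied with $\alpha=1$) gives $H(x)\le Cx$ for all large $x$ and some constant $C>1$, hence $H^{-1}(y)\ge y/C$ for all large $y$. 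Therefore $H^{-1}(W^{-1}(2W(x)+\beta))\ge W^{-1}(2W(x)+\beta)/C$, and applying \eqref{wc} with this $c=C$ yields $w(H^{-1}(W^{-1}(2W(x)+\beta)))\le C'\,w(W^{-1}(2W(x)+\beta)/(C^{-1}))$... more carefully, I would write $w(W^{-1}(2W(x)+\beta)) = w(C\cdot H^{-1}(W^{-1}(2W(x)+\beta))\cdot(\text{something}\le1))\le w(C\cdot H^{-1}(\cdots))\le C'\,w(H^{-1}(\cdots))$ by \eqref{wc}, so that $\widetilde J_\beta(w)\le C'\,J_\beta(w)<\infty$. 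This gives $\widetilde\beta_c(w)\le\beta_c(w)$, hence equality.

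For the dichotomy $\beta_c(w)\in\{-\infty,\infty\}$: suppose $\beta_c(w)=\beta_0$ is finite, so $J_\beta(w)<\infty$ for every $\beta>\beta_0$. I would exploit the fact that replacing $\beta$ by $\beta-c$ for small $c>0$ should still leave the integral finite, using \eqref{Halpha} to control the ratio $W^{-1}(2W(x)+\beta)/W^{-1}(2W(x)+\beta-c)$: indeed $W^{-1}(2W(x)+\beta)=W^{-1}(W(W^{-1}(2W(x)+\beta-c))+c)$, which by \eqref{Halpha} is at most $A\cdot W^{-1}(2W(x)+\beta-c)$ for a constant $A$ depending on $c$. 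Then \eqref{wc} gives $w(W^{-1}(2W(x)+\beta)) \le A'\,w(W^{-1}(2W(x)+\beta-c))$, so $J_{\beta-c}(w)\le A'\,J_\beta(w)<\infty$. Since this lowers the critical value by a fixed amount each time, iterating shows $J_\beta(w)<\infty$ for all $\beta\in\R$, i.e. $\beta_c(w)=-\infty$, contradicting finiteness. Hence $\beta_c(w)\in\{-\infty,+\infty\}$, and by the equality just proved the same holds for $\widetilde\beta_c(w)$.

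For the last assertion, that $w(n)=\kO(n)$ implies \eqref{Halpha}: fix $\alpha>0$ and write, as in the proof of Lemma \ref{surlinear}, $\alpha=\int_n^{W^{-1}(W(n)+\alpha)} du/w(u)$. If $w(u)\le Ku$ for all large $u$, then $\alpha\ge K^{-1}\int_n^{W^{-1}(W(n)+\alpha)} du/u = K^{-1}\log\big(W^{-1}(W(n)+\alpha)/n\big)$, so $W^{-1}(W(n)+\alpha)/n\le e^{K\alpha}$, which is exactly \eqref{Halpha}.

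The main obstacle I anticipate is making the interplay of \eqref{Halpha} and \eqref{wc} fully rigorous at the level of the monotone rearrangements: one has to be careful that the constants produced by \eqref{Halpha} are uniform in $x$ (only eventually, which suffices since the integrands are locally bounded near $0$ under \eqref{twosite}), and that \eqref{wc} is applied with a fixed $c$ not depending on $x$ — both of which hold, but require writing the chain of inequalities in the correct order. The genuinely delicate point is the iteration in the dichotomy argument: one must check that each application of \eqref{Halpha}+\eqref{wc} decreases the admissible $\beta$ by a fixed positive amount (not a shrinking one), so that finitely many steps cover any target $\beta$; this is where I would be most careful, though the estimate $W^{-1}(W(y)+c)\le A(c)\,y$ is genuinely uniform in $y$ for fixed $c$, so the step size is indeed constant.
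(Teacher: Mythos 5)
Your proposal is correct and follows essentially the same route as the paper: the paper also deduces $\widetilde J_\beta(w)\le C J_\beta(w)$ from $H(t)\le Ct$ (a consequence of \eqref{Halpha}) together with \eqref{wc}, establishes the dichotomy via the comparison $J_\beta(w)\ge c\,J_{\beta'}(w)$ for $\beta>\beta'$ using the same two hypotheses (the paper first performs the change of variables $u=W(t)$, but the pointwise estimate is identical to yours, and in fact no iteration is needed since \eqref{Halpha} gives the bound $W^{-1}(v+c)\le A(c)\,W^{-1}(v)$ in one step for any fixed $c>0$), and proves the last assertion by exactly your integral computation.
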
 
\begin{proof}
By using a change of variables, we can write for any $\beta>\beta'$, for some constant $c>0$, 
\begin{align*}
J_\beta(w)& =\int_0^\infty \frac {dt}{w(W^{-1}(2W(t)+\beta))} = \int_0^\infty \frac{w(W^{-1}(u))}{w(W^{-1}(2u+\beta))} \, du\\
& \ge c \int_0^\infty \frac{w(W^{-1}(u))}{w(W^{-1}(2u+\beta'))} \, du=cJ_{\beta'}(w),
\end{align*}
using the two hypotheses of the lemma. This implies that $\beta_c(w) \in \{\pm \infty\}$. Moreover, by \eqref{Halpha}, one has $t\le H(t)\le Ct$, 
for some constant $C>0$, and all $t$. It follows using \eqref{wc} that $\widetilde J_\beta(w) \le CJ_\beta(w)$, for some possibly larger $C$, and all $\beta$, and 
we deduce that $\widetilde \beta_c(w) \in \{\pm \infty\}$ as well.

\noindent Now if there exists $C>0$, such that $w(n) \le Cn$, for all $n\ge 1$, then for any $\alpha>0$,  
$$\alpha = \int_n^{W^{-1}(W(n)+\alpha)} \frac{dt}{w(t)} \ge \frac 1 C \int_n^{W^{-1}(W(n)+\alpha)} \frac{dt}{t} =\frac 1C \log \left(\frac{W^{-1}(W(n)+\alpha)}{n}\right),$$
which proves the second assertion of the lemma. 
\end{proof}

Let us conclude this section by mentioning that by combining the results of \cite{BSS2} with our Theorem \ref{positif5}, and the previous lemma, 
we obtain that any nondecreasing weight function $w$, such that $w(n)\sim n\exp(-(\log n)^\alpha)$, for some $\alpha \in (0,1/2)$ satisfies $\tilde \beta_c(w)=-\infty$. Indeed, we know from \cite{BSS2}, that for such weight localization on $5$ sites occurs with positive probability. Then Theorem  \ref{positif5} shows that $\beta_c(w)$ is finite, and finally Lemma \ref{sublinear} gives that in fact $\beta_c(w)=\tilde \beta_c(w)=-\infty$.  
On the other hand, when $w(n)\sim n\exp(-(\log n)^\alpha)$, with $\alpha>1/2$, the results of \cite{BSS2} show that $J_0(w)=\infty$, which imply $\beta_c(w)\ge 0$. Observing that $w(n) = \mathcal O(n)$, and applying Lemma \ref{sublinear} gives $\beta_c(w)=\tilde \beta_c(w) = \infty$. 

\bigskip 

\textbf{Acknowledgments:} I warmly thank an associate editor for his many comments, which helped correct some mistake and improve greatly the readability of the manuscript.

\end{document}